\newcommand{\sP}{$P$}
\newcommand{\calU}{\mathcal{U}}
\newcommand{\calA}{\mathcal{A}}
\newcommand{\calF}{\mathcal{F}}
\newcommand{\calB}{\mathcal{B}}
\newcommand{\calP}{\mathcal{P}}
\newcommand{\calR}{\mathcal{R}}
\newcommand{\cl}{\operatorname{cl}}
\newcommand{\successors}{\operatorname{succ}}
\renewcommand{\epsilon}{\varepsilon}
\renewcommand{\int}{\operatorname{int}}
\renewcommand{\phi}{\varphi}
\newtheorem{thm}{Theorem}
\newtheorem{pro}[thm]{Proposition}
\newtheorem{lem}[thm]{Lemma}
\newtheorem{cor}[thm]{Corollary}
\title{Scattered \sP-spaces of weight $\omega_1$}
\subjclass[2010]{Primary: 54G12; Secondary: 54F05, 54G10}
\keywords{\sP-space, scattered space, dimensional type}
\author{Wojciech Bielas}
\address{Institute of Mathematics, University of Silesia in Katowice, Bankowa 14, 40-007 Katowice, Poland}
\email{wojciech.bielas@us.edu.pl}
\email{andrzej.kucharski@us.edu.pl}
\email{szymon.plewik@us.edu.pl}
\author{Andrzej Kucharski}
\author{Szymon Plewik}
\date{\today}
\begin{document}

\maketitle

\begin{abstract}

We examine dimensional types of scattered \sP-spaces of weight $\omega_1$.
  Such spaces can be embedded into $\omega_2$.
There are established  similarities between dimensional types of scattered separable metric spaces and dimensional types of \sP-spaces of weight $\omega_1$ with Cantor--Bendixson rank less than $\omega_1$.
\end{abstract}

\section{Introduction}\label{sec:1}

A topological space is said to be a \sP\textit{-space}, whenever  $G_\delta$ subsets are open.
A topological space is \textit{scattered} (dispersed) if every non-empty subspace of it contains an isolated point.
If $X$ is a topological space and $\alpha$ is an ordinal number, then $X^{(\alpha)}$ denotes the $\alpha$-th derivative of $X$, compare \cite[p. 261]{kur} or \cite[p. 64]{sie}.
If $X$ is a scattered space, then \textit{Cantor--Bendixson rank} of $X$ is the least ordinal $N(X)$ such that the derivative $X^{(N(X))}$ is empty, see \cite[p. 34]{kec}.
Thus, if $X^{(N(X))}=\emptyset$  and $\beta<N(X)$, then $X^{(\beta)}\neq\emptyset$, also if  $X$ is a scattered space of cardinality $\omega_1$, then
$N(X)<\omega_2$.

This paper is a continuation of \cite{bkp}, where we have investigated crowded \sP-spaces of cardinality and weight $\omega_1$.
Here, we examine scattered \sP-spaces of weight $\omega_1$.
Following the idea that some proofs on \sP-spaces are similar to proofs concerning (scattered) metric spaces, compare \cite[Lemma 2.2.]{dow}, the readers can modify our argumentation to obtain results stated in \cite{gil}, and also contained in \cite{ms} and \cite{tel}.

It will be convenient to use the notation from  \cite{eng} and \cite{jech}.
 A scattered \sP-space is assumed to be regular and  of weight $\omega_1$, nevertheless, we  shall repeat these assumptions in the  statements of facts.
 For brevity, we write  $\gamma\in Lim$ instead of  $\gamma<\omega_2$ is an infinite limit ordinal. 
Also, a closed and open set will be called \emph{clopen}.
The sum of a family of $\kappa$ many homeomorphic copies of a space $X$ we denote  $\bigoplus_{\kappa}X$.
Basic facts about sums can be found in  \cite[pp. 74--76]{eng}.
If topological spaces $X$ and $Y$ are homeomorphic, then we  write $X\cong Y$.
Following  \cite{fre},  \cite[p. 130]{sie} or \cite[p. 112]{kur}, if $X$ is homeomorphic to a subspace of $Y$, then we write $X\subset_h Y$.
If  $X\subset_hY$ and $Y\subset_hX$, then we  write $X=_hY$ and  say that $X$ and $Y$ have the same dimensional type.
 
The paper is organised  as follows.
First, we observe that any scattered space  of weight $\omega_1$ has to be of cardinality $\omega_1$ and then we establish a lemma on embeddings of spaces with a point together with a decreasing base consisting of clopen sets, Lemma \ref{lem:2}.
In Section \ref{sec:3}, we are concerned with properties of elementary sets, i.e. clopen sets with the last non-empty derivative of cardinality $1$.
Lemma \ref{lem:4} says that a scattered \sP-space of weight $\omega_1$ can be  represented as the sum of a family of elementary sets.
Theorem \ref{thm:6} generalises a result of B. Knaster and K. Urbanik, see \cite{ku} and  \cite[Theorem 9]{tel}, that each scattered metric space is homeomorphic to a subspace of an  ordinal number with the order topology.
To be more precise, dimensional types of scattered \sP-spaces of weight $\omega_1$ are represented by dimensional types of subspaces of $\omega_2$.
Corollary \ref{cor:7a} states that  any scattered \sP-space of weight $\omega_1$ has a scattered compactification.
The notion of a stable set enables  us to reduce dimensional types of  scattered \sP-spaces with countable Cantor--Bendixson rank to those of finite ranks.
In Section \ref{sec:6}, we examine spaces $J(\alpha)$ for any $\alpha<\omega_2$, in particular,
we have established that the space $J(\alpha)$ is  maximal among elementary sets with Cantor--Bendixson rank not greater than $\alpha$.  
Our main results are contained in Section \ref{sec:7}.
Theorem \ref{thm:20} and Corollary \ref{cor:21} are  counterparts of \cite[Theorem 19]{gil} and \cite[Corollaries 29 and  31]{gil}.
Finally, we add some remarks  concerning \sP-spaces with uncountable Cantor--Bendixson ranks.
 We think that a more detailed description of such spaces requires new tools, therefore it seems to be troublesome.

\section{Preliminaries}\label{sec:2}

One can readily check the following  properties of a  \sP-space, see \cite{bkp}.
A regular \sP-space has a base consisting of clopen subsets, hence it is completely regular, \cite[Proposition 1]{bkp}.
 For a countable family of open covers, there exists an open cover  which refines each member of this family.
If a regular \sP-space is of cardinality $\omega_1$, then any open cover has a refinement consisting of clopen sets,  \cite[Lemma 14]{bkp}, and  also a countable union  of clopen sets is clopen, \cite[Corollary 15]{bkp}. 

Note that, there exist \sP-spaces of cardinality $\omega_1$ and of weight greater than $\omega_1$.
Indeed, let $X=\omega_1+1$ be equipped with the topology such that countable ordinal numbers are isolated points and
$$\mathcal{B}(\omega_1)=\{U\cup\{p\}\colon U\mbox{ is a club}\}$$
is a base at the point $\omega_1\in X$, where  a closed unbounded subset of $\omega_1$ is called a club, compare \cite[Definition 8.1.]{jech}.
The intersection of countably many clubs is a club and any base for filter generated by the family of all clubs is of cardinality greater than $\omega_1$, which follows from \cite[Lemma 8.4.]{jech}.
Therefore $X$ is a \sP-space of cardinality $\omega_1$ and of weight greater than $\omega_1$.

\begin{pro}\label{pro:1}
A scattered space of weight at most $\omega_1$ is of cardinality at most $\omega_1$.
\end{pro}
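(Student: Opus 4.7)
The plan is to exhibit an injection from $X$ into a fixed base $\calB$ of cardinality at most $\omega_1$, which at once forces $|X|\leq\omega_1$. Since $X$ is scattered, every point $x\in X$ has a well-defined Cantor--Bendixson rank $\alpha_x$, namely the unique ordinal with $x\in X^{(\alpha_x)}\setminus X^{(\alpha_x+1)}$; equivalently, $x$ is an isolated point of the subspace $X^{(\alpha_x)}$. Consequently one may select a basic open set $B_x\in\calB$ satisfying $B_x\cap X^{(\alpha_x)}=\{x\}$.

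The key step is to verify that the assignment $x\mapsto B_x$ is injective. Suppose $B_x=B_y$ and, without loss of generality, $\alpha_x\leq\alpha_y$; then $X^{(\alpha_y)}\subseteq X^{(\alpha_x)}$, and from
\[
y\in B_y\cap X^{(\alpha_y)}\subseteq B_x\cap X^{(\alpha_x)}=\{x\}
\]
we conclude $y=x$. Hence $|X|\leq|\calB|\leq\omega_1$, which is the desired bound.

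No real obstacle arises: the argument rests on the observation that when two points share a witnessing basic neighborhood, the point of smaller Cantor--Bendixson rank already singles them out, so the rank itself need not be tracked as separate data. The existence of $\alpha_x$ for each $x\in X$ uses only that the derivatives $X^{(\alpha)}$ form a decreasing chain of closed sets that eventually empties out at stage $N(X)$, which is standard.
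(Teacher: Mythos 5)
Your proof is correct, and it is organized a little differently from the paper's. The paper decomposes $X$ into its Cantor--Bendixson levels $X^{(\alpha)}\setminus X^{(\alpha+1)}$, notes that each level is discrete and hence, as a discrete subspace of a space of weight at most $\omega_1$, has cardinality at most $\omega_1$, and then takes the union over the fewer than $\omega_2$ many levels. You instead construct a single global injection $x\mapsto B_x$ of $X$ into a fixed base $\calB$ with $|\calB|\leq\omega_1$, using the rank $\alpha_x$ only to certify that $B_x$ isolates $x$ inside $X^{(\alpha_x)}$; the comparison $\alpha_x\leq\alpha_y$ then makes the one map injective without the rank being recorded as extra data. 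The ingredients are the same, but your packaging has a genuine advantage: the paper's version needs, in addition to the bound on each level, that there are at most $\omega_1$ many levels, and the justification it points to ($N(X)<\omega_2$ for a scattered space of cardinality $\omega_1$) presupposes the cardinality bound being proved. In your argument no such separate bound is required, and $N(X)<\omega_2$ is obtained afterwards for free, since the nonempty levels are pairwise disjoint subsets of a set of size at most $\omega_1$. So the proposal not only proves the proposition but also cleanly supplies the rank bound that the paper's proof implicitly relies on.
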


\begin{proof}
If $X$ is a scattered \sP-space of weight at most $\omega_1$, then
 $$X=\bigcup\{X^{(\alpha)}\setminus X^{(\alpha+1)}\colon \alpha<\beta\},$$ where   $\beta<\omega_2$.
  The inherited topology of each $X^{(\alpha)}\setminus X^{(\alpha+1)}$ is discrete and of cardinality at most $\omega_1$, hence  $|X|\leq\omega_1$.
\end{proof}

Suppose   $f\colon \omega_1\to \omega_1$ is an injection.
Clearly, we have the following.
$$
\begin{array}{ll}
  \mbox{$(*)$ }&\forall_{\beta<\omega_1}\exists_{\alpha<\omega_1} f[(\alpha,\omega_1)]\subseteq (\beta,\omega_1).\\
\mbox{$(**)$ }&\forall_{\alpha<\omega_1}\exists_{\beta<\omega_1} f^{-1}[(\beta,\omega_1)]\subseteq (\alpha,\omega_1).
\end{array}
$$
The next lemma looks to be known, but for the readers convenience, we present it  with a proof.

\begin{lem}\label{lem:2}
Let $X$ and $Y$ be topological spaces such that  the families of clopen subsets $$\mathcal{B}_x=\{V^x_\alpha\colon\alpha<\omega_1\}\mbox{ and }\mathcal{B}_y=\{U^y_\alpha\colon\alpha<\omega_1\}$$
  are decreasing bases at points $x\in X$ and $y\in Y$,  respectively.
If 
  $$X=\{x\}\cup\bigcup\{V^x_\alpha\setminus V^x_{\alpha+1}\colon \alpha<\omega_1\},$$
and $f\colon \omega_1\to\omega_1$ is an injection, and $\{F_\alpha\colon\alpha<\omega_1\}$ is a family of embeddings  $$F_\alpha\colon V^x_\alpha\setminus V^x_{\alpha+1}\to U^y_{f(\alpha)}\setminus U^y_{f(\alpha)+1},\mbox{ for }\alpha<\omega_1,$$ then $X\subset_h Y$.
\end{lem}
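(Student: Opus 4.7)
The plan is to define $F\colon X\to Y$ by $F(x)=y$ and $F|_{V^x_\alpha\setminus V^x_{\alpha+1}}=F_\alpha$, and to verify that $F$ is a topological embedding. The decomposition hypothesis on $X$ shows $F$ is well defined on all of $X$. Since each ``ring'' $V^x_\alpha\setminus V^x_{\alpha+1}$ is the intersection of the two clopen sets $V^x_\alpha$ and $X\setminus V^x_{\alpha+1}$, it is clopen in $X$, and the analogous statement holds in $Y$. Continuity of $F$ and of $F^{-1}$ at points other than $x$ and $y$ is then immediate from the fact that each $F_\alpha$ is an embedding. Three things remain to be checked: injectivity of $F$, continuity of $F$ at $x$, and continuity of $F^{-1}$ at $y$.

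For injectivity, note that since the bases $\mathcal{B}_x$ and $\mathcal{B}_y$ are decreasing, the rings at distinct indices are pairwise disjoint. Injectivity of $f$ then makes the target rings $U^y_{f(\alpha)}\setminus U^y_{f(\alpha)+1}$ disjoint as $\alpha$ varies, and injectivity of each $F_\alpha$ finishes the check on $X\setminus\{x\}$. The value $F(x)=y$ belongs to every $U^y_{f(\alpha)+1}$, whereas each $F_\alpha(w)$ lies outside $U^y_{f(\alpha)+1}$, so $x$ is not identified with any other point. For continuity of $F$ at $x$ I would use $(*)$: given a basic neighbourhood $U^y_\beta$ of $y$, pick $\alpha$ with $f[(\alpha,\omega_1)]\subseteq(\beta,\omega_1)$. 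The decreasing property of $\mathcal{B}_x$ together with the decomposition of $X$ gives
$$V^x_{\alpha+1}=\{x\}\cup\bigcup_{\gamma>\alpha}\bigl(V^x_\gamma\setminus V^x_{\gamma+1}\bigr),$$
and for each $\gamma>\alpha$ the condition $f(\gamma)>\beta$ yields $F_\gamma[V^x_\gamma\setminus V^x_{\gamma+1}]\subseteq U^y_{f(\gamma)}\subseteq U^y_\beta$, so $F[V^x_{\alpha+1}]\subseteq U^y_\beta$.

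The main obstacle is continuity of $F^{-1}$ at $y$, which I would handle by the symmetric use of $(**)$. Given $V^x_\alpha$, choose $\beta$ with $f^{-1}[(\beta,\omega_1)]\subseteq(\alpha,\omega_1)$ and take the witnessing neighbourhood $W=U^y_{\beta+1}$. For $z\in W\cap F[X]$ with $z\neq y$, there is a unique $\gamma$ with $z\in U^y_{f(\gamma)}\setminus U^y_{f(\gamma)+1}$; from $z\in U^y_{\beta+1}$ and $z\notin U^y_{f(\gamma)+1}$ we must have $f(\gamma)>\beta$, because otherwise $f(\gamma)+1\leq\beta+1$ would force $U^y_{\beta+1}\subseteq U^y_{f(\gamma)+1}$. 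Then $(**)$ gives $\gamma>\alpha$, so $F^{-1}(z)\in V^x_\gamma\subseteq V^x_{\alpha+1}\subseteq V^x_\alpha$. The only subtle part is the successor-index bookkeeping that lets one move from ``$z\in U^y_{\beta+1}$'' to ``$f(\gamma)>\beta$''; this is precisely why the statement packages both $(*)$ and $(**)$ rather than assuming any monotonicity of $f$, and no further P-space machinery is required beyond the clopen decreasing bases already in hand.
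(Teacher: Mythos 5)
Your proposal is correct and follows essentially the same route as the paper: the same piecewise definition of $F$, clopenness of the slices to handle all points other than $x$ and $y$, and then $(*)$ for continuity of $F$ at $x$ and $(**)$ for continuity of $F^{-1}$ at $y$, including the same successor-index bookkeeping in the last step.
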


\begin{proof}
We obtain an injection  $F\colon X\to F[X]\subseteq Y$, putting
  $$F(t)=
  \begin{cases}
    y,&\mbox{if }t=x;\\
    F_\alpha(t),&\mbox{if }t\in V^x_{\alpha}\setminus V^x_{\alpha+1}.
  \end{cases}$$
  The sets $V^x_\alpha\setminus V^x_{\alpha+1}$ and $U^y_\alpha\setminus U^y_{\alpha+1}$ are clopen, hence $F[X\setminus\{x\}]\subseteq Y$ is a homeomorphic copy of $X\setminus\{x\}$.
It remains to show that the function $F$ is continuous at the point $x\in X$ and $F^{-1}$ is continuous at the point $y\in Y$.

 Fix a set $U^y_\beta\ni F(x)$.
By  $(*)$  there exists   $\alpha<\omega_1$ such that $f[(\alpha,\omega_1)]\subseteq(\beta,\omega_1)$.
Therefore $$F[V^x_{\alpha+1}]=\{y\}\cup\bigcup_{\gamma>\alpha}F[V^x_\gamma\setminus V^x_{\gamma+1}]\subseteq\{y\}\cup\bigcup_{\gamma>\alpha}U^y_{f(\gamma)}\setminus U^y_{f(\gamma)+1}\subseteq U^y_\beta,$$
hence $F$ is continuous at $x\in X$.

Now,  fix $V^x_\alpha\ni x$.
There exists $\beta<\omega_1$ such that
$$(\beta,\omega_1)\cap f[[0,\omega_1)]\subseteq f[(\alpha,\omega_1)],$$ because of $(**)$. 
We have
 $$ \begin{array}{l}
    U^y_{\beta+1}\cap F[X] = \{y\}\cup\bigcup\{F[X]\cap U^y_{\xi}\setminus U^y_{\xi+1}\colon \xi>\beta\}\subseteq\\
\subseteq    \{y\}\cup\bigcup\{F[X]\cap U^y_{f(\gamma)}\setminus U^y_{f(\gamma)+1}\colon \gamma>\alpha\}=\\
=    \{y\}\cup\bigcup_{\gamma>\alpha}F[V^x_{\gamma}\setminus V^x_{\gamma+1}]= F[V^x_\alpha].
\end{array}$$
Therefore $F^{-1}[U^y_{\beta+1}]\subseteq V^x_\alpha$, hence $F^{-1}$ is continuous at $y\in Y$.
\end{proof}

Let $X$ be a \sP-space.
A base $\mathcal{B}_x=\{V_\alpha\colon \alpha<\omega_1\}$ at a point $x\in X$ is called a \sP\textit{-base} whenever
\begin{itemize}
\item[--] $V_0=X$ and sets $V_\alpha$ are clopen,
  \item[--] $V_\beta\supseteq V_\alpha$ for  $\beta<\alpha<\omega_1$,
  \item[--] $V_\alpha=\bigcap\{V_\beta\colon \beta<\alpha\}$  for a limit ordinal number $\alpha<\omega_1$.
\end{itemize}
Moreover, the sets $V_\alpha\setminus V_{\alpha+1}$ will be called \textit{slices}.
Also,  we have
$$X\setminus\{x\}=\bigcup\{V_\alpha\setminus V_{\alpha+1}\colon\alpha<\omega_1\}.$$

Note that if  $X$ is a \sP-space and $x\in X$, then there exists a \sP-base at point $x\in X$.
Indeed, let $\{V_\alpha\colon \alpha<\omega_1\}$ be a base at a point $x\in X$, which consists of clopen sets.
Putting $W_\alpha=\bigcap_{\gamma<\alpha}V_\gamma$, we obtain the family $\{W_\alpha\colon \alpha<\omega_1\}$ which is a desired \sP-base.

For the purpose of Theorem \ref{thm:10}  we will need the following notions and Lemma \ref{lem:3}.
Let $(P,\leq)$ be an ordered set.
An \textit{antichain} in $P$ is a set $A \subseteq P$ such that any two distinct elements $x, y\in A$ are \textit{incomparable}, i.e., neither $x \leq y$ nor $y \leq x$.
A nonempty $C \subseteq P$ is a \textit{chain} in $P$ if $C$ is linearly ordered by $\leq$.
Now, assume that $(P,\leq)$ is a well-ordered set.
 If $1\leq n<\omega$, then let $\preceq$ be the coordinate-wise order on the product $P^{n}$, i.e. $(a_1,\ldots,a_n)\preceq(b_1,\ldots,b_n)$, whenever $a_i\leq b_i$ for $0< i\leq n$.
 The following variant of Bolzano--Weierstrass theorem seems to be known, it can be  deduced  from \cite[Lemma 28]{gil}.

 \begin{lem}\label{lem:3}
 If $(P,\leq)$ is a well-ordered set, then  any infinite subset of $(P^n,\preceq)$ contains an infinite increasing sequence.
 In particular, any antichain  and any decreasing sequence in $(P^n,\preceq)$ should be finite.\qed
 \end{lem}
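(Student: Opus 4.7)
The plan is to proceed by induction on $n$, but I would first strengthen the conclusion to the sequence form: every infinite sequence $(x_k)_{k<\omega}$ in $(P^n,\preceq)$ admits an index-respecting weakly $\preceq$-increasing subsequence $x_{k_0}\preceq x_{k_1}\preceq\cdots$ with $k_0<k_1<\cdots$. The stated form then drops out: enumerate the given infinite subset as a sequence of pairwise distinct points, extract such a subsequence, and observe that two distinct $\preceq$-comparable tuples in $P^n$ are automatically strictly $\prec$-comparable, upgrading "weakly increasing" to "strictly increasing".

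For the base case $n=1$ the sequence form follows directly from well-orderedness of $P$: recursively let $k_0$ be the least index at which $(x_k)$ attains its minimum, and $k_{j+1}$ the least index greater than $k_j$ at which the tail $(x_k)_{k>k_j}$ attains its (well-defined) minimum. Each successive minimum is at least as large as the previous one, so the extracted subsequence is weakly increasing. For the inductive step from $P^n$ to $P^{n+1}$, write each term as $x_k=(a_k,b_k)$ with $a_k\in P$ and $b_k\in P^n$. First apply the base case to $(a_k)$ to thin to a subsequence along which the first coordinates are weakly increasing; then apply the inductive hypothesis to the corresponding sequence of second coordinates in $P^n$ to thin further. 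On the final sub-subsequence both coordinate blocks are weakly monotone, hence the tuples themselves are weakly $\preceq$-increasing in $P^{n+1}$.

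The two "in particular" clauses are then immediate: an infinite antichain would, by the main conclusion, contain a strictly comparable pair, contradicting pairwise incomparability; and any infinite strictly $\preceq$-decreasing sequence has distinct terms, so its underlying set would contain a strictly increasing pair $x_{k_0}\prec x_{k_1}$ with $k_0<k_1$, contradicting $x_{k_0}\succ x_{k_1}$ for the original sequence.

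I do not foresee a deep obstacle. The only real point of care is the trade between the "infinite subset" formulation of the statement and the "infinite sequence" formulation used internally, which is what allows the two successive subsequence-extractions to compose cleanly in the inductive step. Once that trade is made, the argument is the classical two-coordinate dovetailing behind the fact that a finite product of well-quasi-orders is a well-quasi-order, specialised here to the trivial case of well-orders.
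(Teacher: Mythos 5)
Your proof is correct. The paper itself offers no argument for Lemma \ref{lem:3} (it is stated with a \qed and the remark that it can be deduced from Gillam's Lemma 28), so there is no authorial proof to compare against; your argument is the standard one, namely the minimal-element extraction for $n=1$ followed by coordinatewise thinning of subsequences, and the passage from the ``infinite subset'' formulation to the ``infinite sequence'' formulation, together with the observation that distinct $\preceq$-comparable tuples are strictly comparable, is handled correctly. The derivation of the two ``in particular'' clauses is also sound.
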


\section{On elementary sets}\label{sec:3} 
  
  A clopen subset $E$ of a \sP-space is \textit{elementary}, whenever  the derivative $E^{( N(E)-1)}$ is a singleton.
 Clearly, a singleton is an elementary set and if $E$ is an elementary set, then $N(E)$ is not a limit ordinal. 

\begin{lem}\label{lem:4}
If $X$ is a regular scattered \sP-space of weight $\omega_1$, then any open cover of $X$ can be refined by a partition consisting of elementary sets.
\end{lem}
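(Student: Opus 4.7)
The plan is transfinite induction on $\beta = N(X)$. The base cases $\beta \le 1$ are immediate: for $\beta = 0$ the empty partition works, and for $\beta = 1$ the space is discrete and partitions into singletons, each of which is an elementary set of rank $1$ refining any cover. For $\beta \ge 2$, begin by applying \cite[Lemma 14]{bkp} to refine $\calU$ to a cover consisting of clopen sets, and then split into the cases $\beta \in Lim$ and $\beta = \alpha + 1$.

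If $\beta \in Lim$, then $X^{(\beta)} = \emptyset$, so $\{X \setminus X^{(\gamma)} : \gamma < \beta\}$ is an open cover of $X$. Intersecting it with $\calU$ and applying \cite[Lemma 14]{bkp} once more produces a clopen cover $\{V_\xi : \xi < \omega_1\}$ refining $\calU$ (the length bound comes from $|X| \le \omega_1$ via Proposition \ref{pro:1}, after passing to a subcover), where each $V_\xi$ satisfies $V_\xi \cap X^{(\gamma_\xi)} = \emptyset$ for some $\gamma_\xi < \beta$ and therefore has $N(V_\xi) \le \gamma_\xi < \beta$. Setting $W_\xi = V_\xi \setminus \bigcup_{\eta < \xi} V_\eta$ uses only countable unions of clopen sets, so each $W_\xi$ is clopen and the family $\{W_\xi\}$ is a clopen partition of $X$ in which $N(W_\xi) < \beta$. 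The induction hypothesis applied to each $W_\xi$ together with the cover $\{U \cap W_\xi : U \in \calU\}$ yields an elementary partition of $W_\xi$ refining $\calU$, and the union over $\xi$ is the desired partition of $X$.

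If $\beta = \alpha + 1$, then $X^{(\alpha)}$ is closed and, as a subspace, discrete. For each $x \in X^{(\alpha)}$ select a clopen set $V_x$ with $x \in V_x \subseteq U$ for some $U \in \calU$ and $V_x \cap X^{(\alpha)} = \{x\}$; this is possible since $X$ has a clopen base and $x$ is isolated in $X^{(\alpha)}$. The family $\{V_x : x \in X^{(\alpha)}\} \cup \{U \cap (X \setminus X^{(\alpha)}) : U \in \calU\}$ is an open cover of $X$; refine it to a clopen cover via \cite[Lemma 14]{bkp} and form the partition $\{W_\xi\}$ as in the limit case. Each $W_\xi$ is contained in some $V_x$ or in $X \setminus X^{(\alpha)}$, so $|W_\xi \cap X^{(\alpha)}| \le 1$. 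If $W_\xi \cap X^{(\alpha)} = \{x\}$, then $W_\xi^{(\alpha)} = \{x\}$ and $W_\xi^{(\alpha+1)} = \emptyset$, so $W_\xi$ is elementary of rank $\beta$; if $W_\xi \cap X^{(\alpha)} = \emptyset$, then $N(W_\xi) \le \alpha < \beta$ and the induction hypothesis supplies an elementary partition of $W_\xi$.

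The main technical point is the clopen bookkeeping in the transfinite ``subtract previously enumerated sets'' construction: everything runs because the indexing has length at most $\omega_1$, so each $\bigcup_{\eta < \xi} V_\eta$ is a countable union of clopen sets and hence clopen by the $P$-space property \cite[Corollary 15]{bkp}. Scatteredness then provides, at each stage of the induction, either a closed discrete top layer $X^{(\alpha)}$ (in the successor case) or a refinement by open sets of strictly smaller Cantor--Bendixson rank (in the limit case), which is exactly what reduces the problem to the induction hypothesis.
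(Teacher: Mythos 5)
Your proposal is correct and follows essentially the same route as the paper: transfinite induction on $N(X)$, using the cover $\{X\setminus X^{(\gamma)}\colon\gamma<\beta\}$ in the limit case and the closed discrete top layer $X^{(\alpha)}$ in the successor case, with clopen refinements disjointified into partitions. The only difference is that you spell out the disjointification $W_\xi=V_\xi\setminus\bigcup_{\eta<\xi}V_\eta$ and its reliance on \cite[Lemma 14 and Corollary 15]{bkp}, which the paper leaves implicit in its appeal to the existence of refining partitions.
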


\begin{proof}
Let $\{U_\gamma\colon \gamma<\omega_1\}$ be an open cover of $X$.
If $ N(X)=1$, then $X$ is a discrete space, so there is nothing to do.
Assume that the hypothesis is fulfilled for each scattered \sP-space $Y$ with $N(Y)<\alpha$.
If  $ N(X)=\alpha$ is a limit ordinal number, then the family $\{X\setminus X^{(\gamma)}\colon \gamma<\alpha\}$ is an open cover of $X$.
So, there exists a partition $\{V_\gamma\colon \gamma<\omega_1\}$ which refines both covers  $\{U_\gamma\colon\gamma<\omega_1\}$ and $\{X\setminus X^{(\gamma)}\colon\gamma<\alpha\}$.
By the induction hypothesis, we can assume that each $V_\gamma$ is the union of elementary subsets, since $N(V_\gamma)\leq\gamma<\alpha$.
In the case   $ N(X)=\beta+1$, the derivative $X^{(\beta)}$ is a discrete space.
Let $\{V_\gamma\colon \gamma<\omega_1\}$ be a partition of $X$ which refines $\{U_\gamma\colon\gamma<\omega_1\}$ and
such that each $V_\gamma\cap X^{(\beta)}$ is a singleton or $V_\gamma\cap X^{(\beta)}=\emptyset$.
If $V_\gamma\cap X^{(\beta)}$ is a singleton, then $V_\gamma$
is an elementary subset.
But if a set $V_\gamma\subseteq X\setminus X^{(\beta)}$, then, by the induction hypothesis, it is the union of a family  of elementary subsets.
\end{proof}

 \begin{pro}\label{pro:5}
   If $X$ is an elementary set and  $\alpha<N(X)$, then there exists an elementary subset $E\subseteq X$ such that $N(E)=\alpha+1$.
   Moreover, if $\alpha+1<N(X)$, then there exists uncountable many pairwise disjoint elementary subsets $E\subseteq X$ such that $N(E)=\alpha+1$.
 \end{pro}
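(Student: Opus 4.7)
The plan is to obtain the required elementary subset by selecting a point at Cantor--Bendixson level exactly $\alpha$ and taking a clopen neighbourhood that meets $X^{(\alpha)}$ only at that point; the moreover part is then handled by iterating this construction $\omega_1$ many times via transfinite recursion. A preliminary observation, proved by transfinite induction, is that for any clopen $U\subseteq X$ one has $U^{(\gamma)}=U\cap X^{(\gamma)}$ for every ordinal $\gamma$ (isolation in a clopen subspace agrees with isolation in the ambient space, and one passes through limit levels by intersecting).

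For the first assertion, since $\alpha<N(X)$ the derivative $X^{(\alpha)}$ is non-empty and scattered, hence its set of isolated points $X^{(\alpha)}\setminus X^{(\alpha+1)}$ is dense in $X^{(\alpha)}$ and in particular non-empty. Pick $y$ in this set (when $\alpha=N(X)-1$ one may simply take $y=x_0$). Using that a regular \sP-space has a base of clopen sets, choose a clopen $E\subseteq X$ with $E\cap X^{(\alpha)}=\{y\}$. By the preliminary observation, $E^{(\alpha)}=\{y\}$ and $E^{(\alpha+1)}=\emptyset$, so $E$ is elementary with $N(E)=\alpha+1$.

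The key step for the moreover part is the following claim: if $z\in X^{(\alpha+1)}$ and $V$ is a clopen neighbourhood of $z$ in $X$, then $V\cap(X^{(\alpha)}\setminus X^{(\alpha+1)})$ is uncountable. If it were at most countable, enumerate it as $\{a_n:n<\omega\}$ (with repetitions if needed) and for each $n$ choose a clopen $W_n\subseteq V$ with $W_n\cap X^{(\alpha)}=\{a_n\}$. By the \sP-space property for spaces of cardinality $\omega_1$ recalled in Section~\ref{sec:2}, applicable here by Proposition~\ref{pro:1}, the union $\bigcup_n W_n$ is clopen; consequently $V':=V\setminus\bigcup_n W_n$ is a clopen neighbourhood of $z$ with $V'\cap X^{(\alpha)}\subseteq X^{(\alpha+1)}$. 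But $V'\cap X^{(\alpha)}$ is a non-empty relatively open subset of the scattered space $X^{(\alpha)}$, so it must contain an isolated point of $X^{(\alpha)}$, i.e.\ an element of $X^{(\alpha)}\setminus X^{(\alpha+1)}$ --- a contradiction.

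With the claim available, I would run a transfinite recursion of length $\omega_1$: having built pairwise disjoint elementary clopen subsets $E_\zeta$ with $N(E_\zeta)=\alpha+1$ for $\zeta<\xi<\omega_1$, the set $F_\xi:=\bigcup_{\zeta<\xi}E_\zeta$ is clopen, and $X\setminus F_\xi$ is a clopen neighbourhood of any $z\in X^{(\alpha+1)}$, so the claim supplies a point $y_\xi\in(X\setminus F_\xi)\cap(X^{(\alpha)}\setminus X^{(\alpha+1)})$ together with a clopen $E_\xi\subseteq X\setminus F_\xi$ satisfying $E_\xi\cap X^{(\alpha)}=\{y_\xi\}$, which is the next term in the sequence. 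I expect the main obstacle to be the uncountability claim itself: the countable-union-of-clopen-is-clopen property of \sP-spaces of cardinality $\omega_1$ is precisely what makes a hypothetical countable exhaustion incompatible with the density of isolated points in a scattered space, and without this \sP-space feature the proof does not seem to go through.
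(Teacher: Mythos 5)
Your proof is correct; the first assertion is handled exactly as in the paper, but for the ``moreover'' part you take a genuinely different and more self-contained route. The paper fixes an elementary $E\subseteq X\setminus X^{(\alpha+2)}$ with $E\cap X^{(\alpha+1)}=\{x\}$, partitions $E\setminus\{x\}$ into elementary sets via Lemma~\ref{lem:4}, and asserts that uncountably many pieces have rank $\alpha+1$ --- an assertion that silently relies on exactly the fact you isolate as your key claim, namely that a clopen neighbourhood of a point of $X^{(\alpha+1)}$ meets $X^{(\alpha)}\setminus X^{(\alpha+1)}$ in uncountably many points. You prove that claim directly from the closure of clopen sets under countable unions (legitimate here by Proposition~\ref{pro:1} and the facts recalled in Section~\ref{sec:2}) and then produce the disjoint family by a transfinite recursion of length $\omega_1$, bypassing Lemma~\ref{lem:4} entirely. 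The paper's version is shorter once Lemma~\ref{lem:4} is in hand; yours makes explicit the uncountability argument the paper leaves implicit. One cosmetic remark: the symbol $x_0$ in your first paragraph is never introduced (presumably the unique point of $X^{(N(X)-1)}$), and the parenthetical containing it can simply be dropped.
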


 \begin{proof}
   Assume that $\alpha<N(X)$ and $X^{(\alpha)}\setminus X^{(\alpha+1)}\neq\emptyset$.
   Each point of $X^{(\alpha)}\setminus X^{(\alpha+1)}$ is isolated in $X^{(\alpha)}$, hence there exists $x\in X^{(\alpha)}$ and  a  clopen set $E$ such that $ E\cap X^{(\alpha)}=\{x\}$.
   Clearly, $E$ is an elementary set and $N(E)=\alpha+1$.
   
If $\alpha+1<N(X)$, then fix an elementary set $E\subseteq X\setminus X^{(\alpha+2)}$ with a point  $x\in X^{(\alpha+1)}\cap E$.
We have $N(E\setminus\{x\})=\alpha+1$.
By Lemma \ref{lem:4}, the set $E\setminus \{x\}$  contains an uncountable family of pairwise disjoint elementary subsets, each one with the Cantor--Bendixson rank $\alpha+1$.
 \end{proof}

B. Knaster and K. Urbanik  showed  that a scattered separable metric  space can be embedded in a sufficiently large countable  ordinal number, see \cite{ku}.
Later, R. Telg\'arsky removed the assumption of separability, showing that each metrizable scattered space can be embedded in a sufficiently large  ordinal number, see \cite{tel}.

\begin{thm}\label{thm:6}
Any regular scattered  $P$-space of weight $\omega_1$ can be embedded into $\omega_2$.
\end{thm}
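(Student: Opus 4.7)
The plan is to induct on $N(E)$, proving the stronger auxiliary claim that every elementary set $E$ with $N(E)<\omega_2$ embeds as a subspace of some bounded initial segment $[0,\eta)\subseteq\omega_2$. Granted this claim, the full theorem follows quickly: by Lemma~\ref{lem:4} the space $X$ admits a partition into at most $\omega_1$ elementary sets $\{E_\xi\colon\xi<\omega_1\}$; each $E_\xi$ embeds into some $[0,\eta_\xi)\subseteq\omega_2$ by the claim; since the cofinality of $\omega_2$ exceeds $\omega_1$, $\eta:=\sup_{\xi<\omega_1}\eta_\xi<\omega_2$, and the disjoint union of the $E_\xi$'s can be packed into the successive clopen blocks $[\eta\cdot\xi,\eta\cdot(\xi+1))\subseteq\omega_2$, whose union lies in $\eta\cdot\omega_1<\omega_2$.

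The base case $N(E)=1$ is trivial since then $|E|=1$. For the inductive step $N(E)=\alpha+1$, let $x$ be the unique point of $E^{(\alpha)}$ and fix a \sP-base $\{V_\xi\colon\xi<\omega_1\}$ at $x$. Each slice $V_\xi\setminus V_{\xi+1}$ is a clopen subspace of $E$ missing $x$, hence its rank is at most $\alpha$; by a second use of Lemma~\ref{lem:4}, it partitions into elementary sets of rank $\leq\alpha$, each embeddable into a bounded segment of $\omega_2$ by the induction hypothesis, and the same packing argument as above embeds the whole slice into some $[0,\mu_\xi)\subseteq\omega_2$.

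I would then build the target of Lemma~\ref{lem:2} inside $\omega_2$. Define a continuous strictly increasing sequence $(\delta_\xi)_{\xi<\omega_1}$ below $\omega_2$ by $\delta_0=0$, $\delta_{\xi+1}=\delta_\xi+\mu_\xi$, and $\delta_\lambda=\sup_{\xi<\lambda}\delta_\xi$ at limit $\lambda$, and put $y=\sup_{\xi<\omega_1}\delta_\xi<\omega_2$. In the subspace $Y=[0,y]\subseteq\omega_2$ the sets $U_\xi=[\delta_\xi,y]$ are clopen, decreasing, and satisfy $U_\lambda=\bigcap_{\xi<\lambda}U_\xi$ at limits by the continuity of $(\delta_\xi)$, so they form a \sP-base at $y$; the slice $U_\xi\setminus U_{\xi+1}=[\delta_\xi,\delta_{\xi+1})$ has order type $\mu_\xi$ and therefore contains, by translation, a homeomorphic copy of $V_\xi\setminus V_{\xi+1}$. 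Applying Lemma~\ref{lem:2} with the identity injection $f$ yields $E\subset_h Y\subseteq\omega_2$, closing the induction. The principal obstacle is matching the abstract \sP-base at the top point $x$ with a concrete \sP-base at an ordinal $y\in\omega_2$ whose successive slices are wide enough to host all the inductive embeddings; this is absorbed by the normal-sequence construction together with the basic fact that any $\omega_1$-indexed family of ordinals below $\omega_2$ is bounded there.
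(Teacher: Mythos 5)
Your proposal follows essentially the same route as the paper: induct on the Cantor--Bendixson rank, reduce to elementary sets via Lemma~\ref{lem:4}, pack $\omega_1$ many pieces into successive disjoint intervals using that the cofinality of $\omega_2$ exceeds $\omega_1$, and handle the top point of an elementary set by matching an abstract \sP-base at $x$ with a decreasing system of ordinal tails at a point $y$, then invoking Lemma~\ref{lem:2}. Organising the induction around elementary sets only (and deriving the general case once at the end) is a mild and harmless repackaging of the paper's case analysis.

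One step needs repair as written: the sets $U_\xi=[\delta_\xi,y]$ are \emph{not} clopen in $[0,y]$ when $\xi$ is a limit, since then $\delta_\xi=\sup_{\xi'<\xi}\delta_{\xi'}$ is a limit ordinal and every basic neighbourhood $(q,\delta_\xi]$ meets $[0,\delta_\xi)$; so the hypotheses of Lemma~\ref{lem:2} are not literally met for $Y=[0,y]$. Correspondingly, if the copy of the slice $V_\xi\setminus V_{\xi+1}$ occupies the left endpoint $\delta_\xi$, that point can be approached from below by the images of earlier slices, which would break the embedding. The fix is mechanical: place the copy of each slice inside the \emph{open} interval $(\delta_\xi,\delta_{\xi+1})$ (equivalently, shift by one), and either take the tails $(\delta_\xi,y]$, which are genuinely clopen in $[0,y]$, or restrict attention to the subspace $\{y\}\cup\bigcup_\xi F_\xi[V_\xi\setminus V_{\xi+1}]$, in which the traces of the tails are clopen because the troublesome ordinals $\delta_\lambda$ have been excluded. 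This is precisely how the paper arranges it, using open intervals $(\tau_\mu,\tau_{\mu+1})$. The same one-point shift should be applied in your final packing of the elementary pieces into the blocks $[\eta\cdot\xi,\eta\cdot(\xi+1))$.
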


\begin{proof}
We proceed inductively with respect to  the rank $N(Y)<\omega_2$ of scattered $P$-spaces $Y$.
If $N(Y)=1$, then $Y$ is discrete, hence it is homeomorphic to the family of all non-limit countable ordinals.
First, we present  the second step  of the induction.
 Let $Y$ be a scattered \sP-space with  $N(Y)=2$.
The derived set $Y^{(1)}$ is discrete and closed, so we can choose an open cover $\mathcal{P}$  such that if $V\in\mathcal{P}$, then $V\cap Y^{(1)}$ is a singleton or $V=Y\setminus Y^{(1)}$.
Let  $\mathcal{P}^*$ be a partition which refines  $\mathcal{P}$.
Thus each member of  $\mathcal{P}^*$ has at most one accumulation point and also $|\mathcal{P}^*|\leq \omega_1$.
Members of  $\mathcal{P}^*$ should be  homeomorphic to $J(2)$, $i(2)$, $i(2)\oplus D$, or $D$,  where $D$ is a discrete space and  $J(2)=\successors([0,\omega_1^2))\cup\{\omega_1^2\}$, and $i(2)=\successors([0,\omega_1))\cup\{\omega_1\}$.
Thus, one can embed members of $\calP^*$ into successive disjoint intervals of $\omega_2$.

We inductively assume that if $Z$ is a \sP-space with  $N(Z)<\alpha$, then $Z$ is homeomorphic to a subspace of an initial interval  of $\omega_2$.
 Let $Y$ be a \sP-space such that  $N(Y)=\alpha$ is a limit ordinal, so $\mathcal{P} = \{Y\setminus Y^{(\tau)}\colon \tau<\alpha\}$ is  an open cover of $Y$.
  Let $\mathcal{P}^*$ be a partition which  refines  $\mathcal{P}$.
For each $V\in\mathcal{P}^*$,   we have  $N(V)<\alpha$, so  one can embed members of $\mathcal{P}^*$ into successive disjoint intervals of $\omega_2$.

Let $Y$ be a \sP-space such that  $N(Y)=\gamma+1<\omega_2$.
If  $Y^{(\gamma)}=\{z\}$,  then
 fix a \sP-base $\{W_\mu\colon \mu<\omega_1\}$ at  $z\in Y$.
Clearly,
  $$\mathcal{P} =\{W_{\mu}\setminus W_{\mu+1}\colon \mu<\omega_1\},$$
  is a partition of the subspace $Y\setminus\{z\}$, consisting of clopen sets in $Y$.
  Let $Y_1\subseteq [0,\tau_1)$ be a subspace homeomorphic to $Y\setminus W_1$ and   $Y_\mu\subseteq (\tau_\mu,\tau_{\mu+1})$ be homeomorphic to $W_\mu\setminus W_{\mu+1}$, where $(\tau_\mu,\tau_{\mu+1})$ are successive disjoint intervals of $\omega_2$.
  The subspace
  $$\bigcup\{Y_\mu\colon 0<\mu<\omega_1\}\cup\{\sup_{0<\mu<\omega_1} \tau_\mu\}\subseteq [0,\sup_{0<\mu<\omega_1} \tau_\mu]\subseteq\omega_2$$
  is homeomorphic to $Y$, where  the ordinal number $\sup\{\tau_\mu\colon 0<\mu<\omega_1\}$ is assigned to $z$.
  If $Y^{(\gamma)}$ is not a singleton, then $Y$ is the sum of elementary sets with Cantor--Bendixson rank $\gamma+1$.
As previously, we embed these elementary sets into  successive disjoint intervals of $\omega_2$. 
\end{proof}

Theorem \ref{thm:6} shows that all scattered \sP-spaces of weight $\omega_1$ share  topological properties  of the  generalised ordered spaces, compare \cite{lut}.
We omit a detailed discussion of this kind and   confine ourselves to  a counterpart of the  Knaster--Urbanik result, see \cite{ku}.

\begin{cor}\label{cor:7a}
A regular scattered \sP-space of weight $\omega_1$ has a scattered compactification of cardinality $\omega_1$.
\end{cor}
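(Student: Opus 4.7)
The plan is to invoke Theorem~\ref{thm:6} and take a closure inside an ordinal space. Let $Y$ be a regular scattered \sP-space of weight $\omega_1$. By Theorem~\ref{thm:6} there is an embedding $h\colon Y\to\omega_2$. The first step is to upgrade this to an embedding into a bounded initial interval $[0,\beta]\subseteq\omega_2$ with $\beta<\omega_2$. Inspecting the inductive construction in the proof of Theorem~\ref{thm:6}, one sees that the image always lies in a set of the form $[0,\sup_{\mu<\omega_1}\tau_\mu]$ where each $\tau_\mu<\omega_2$; since $\operatorname{cf}(\omega_2)=\omega_2>\omega_1$, such a supremum stays strictly below $\omega_2$. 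So $\beta:=\sup_{\mu<\omega_1}\tau_\mu<\omega_2$, and in particular $|[0,\beta]|\leq\omega_1$.

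Next, I would use the fact that the ordinal $[0,\beta]$ with its order topology is a compact, Hausdorff, scattered space: compactness of a successor ordinal is standard, and scatteredness follows because any nonempty subset of an ordinal has a least element, which is isolated in the inherited order topology. Define $K:=\cl_{[0,\beta]} h[Y]$.

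Then $K$ is the desired compactification. Indeed, $K$ is compact as a closed subspace of the compact space $[0,\beta]$, scattered as a closed subspace of a scattered space, and satisfies $|K|\leq|[0,\beta]|\leq\omega_1$. By the very definition of closure, $h[Y]$ is dense in $K$, and $h$ is a homeomorphism onto $h[Y]$, so $K$ is a scattered compactification of $Y$ of cardinality at most $\omega_1$.

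The only genuine obstacle I foresee is the first step, namely confirming from the proof of Theorem~\ref{thm:6} that the embedding actually lands inside a bounded initial segment of $\omega_2$ (rather than cofinally into $\omega_2$, which is not compact). Once this cofinality observation is in hand, the rest reduces to standard facts about ordinal spaces with the order topology.
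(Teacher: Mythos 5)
Your proof is correct and follows essentially the same route as the paper: embed $Y$ into a bounded initial interval of $\omega_2$ via Theorem~\ref{thm:6} and take the closure there, which is compact, scattered, and of cardinality at most $\omega_1$. Your explicit cofinality remark ($\operatorname{cf}(\omega_2)>\omega_1$, so the suprema in the construction stay below $\omega_2$) is a worthwhile clarification of what the paper leaves implicit in the phrase ``initial interval.''
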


\begin{proof}
 Any regular scattered \sP-space of weight $\omega_1$ has a homeomorphic copy contained in a initial interval of $\omega_2$, thus the closure of this copy is the desired compactification.
 \end{proof}

 Clearly, among regular \sP-spaces only finite ones are compact, so any compactification of an infinite \sP-space is not a \sP-space.

\section{Stable sets with finite Cantor--Bendixson rank}

Assume that $J(0)$ is the empty set and $J(1)$ is a singleton.
But $J(2)=\successors([0,\omega_1^2))\cup\{\omega_1^2\}$, thus $J(2)$ is a \sP-space with exactly one accumulation point $x$ such that there exists a \sP-base $\{V_\alpha\colon\alpha<\omega_1\}$ at $x\in J(2)$ with all slices $V_\alpha\setminus V_{\alpha+1}$ of cardinality $\omega_1$, being discrete as subspaces.
Assume that the \sP-space $J(n-1)$ is defined, then $J(n)$ is the \sP-space with $J(n)^{(n-1)}=\{x\}$ such that there exists \sP-base $\{V_\alpha\colon\alpha<\omega_1\}$ at the point $x\in J(n)$ with $V_\alpha\setminus V_{\alpha+1}\cong \bigoplus_{\omega_1}J(n-1)$, for each $\alpha<\omega_1$.
Analogously, let $i(0)=J(0)$.
If the \sP-space $i(n-1)$ is defined, then $i(n)$ is the \sP-space with $i(n)^{(n-1)}=\{x\}$ and a \sP-base at $x$ such that slices are  homeomorphic to the sum of $\omega$ many copies of $i(n-1)$.

Adapting the idea from \cite[p. 248]{pw},  we  change the definition of a stable set.
Namely, among the  elementary sets  we shall single out stable sets as follows.
Let $E$ be an elementary set such that $E^{(n)}=\{g\}$, where  $n<\omega$.
Considering $E$  as a \sP-space, we say that $E$ is a \textit{stable} set, whenever there is a \sP-base  at $g\in E$ such that any  two slices are homeomorphic.
A singleton is a stable set.
Let $X$ be a \sP-space  such that $X^{(1)}=\{g\}$.
If there exists a \sP-base at $g\in X$ with countably  infinite  slices, then $X$ is a stable set.
By Lemma \ref{lem:2}, such a space $X$ is unique up to homemorphism, in fact it is $i(2)$.
The space $J(2)$ is a stable set, but the elementary set $i(2)\oplus D$ is not stable, whenever  $D$ is uncountable and discrete.
Obviously, $J(2)$ and $i(2)$ are the only stable  sets in the class of all \sP-spaces with Cantor--Bendixson rank $2$.
This class consists of  spaces which have three different dimensional types: $i(2)$, $i(2)\oplus D$ and $J(2)$; and moreover
$$i(2)\subset_hi(2)\oplus D\subset_h J(2).$$
If $E$ is a stable set and $N(E)=n+1$, then the set $E$ is sometimes called $n$\textit{-stable}.
By Lemma \ref{lem:2}, we have  the following.

\begin{cor}\label{cor:6}
  Suppose that $E$ and $F$ are $n$-stable sets with \sP-bases $\{V_\alpha\colon\alpha<\omega_1\}$ and $\{W_\alpha\colon\alpha<\omega_1\}$, respectively, witnessing stability.
  If  $V_0\setminus V_{1}\cong W_0\setminus W_{1}$, then $E\cong F$.\qed
  \end{cor}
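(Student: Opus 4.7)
My plan is to apply Lemma \ref{lem:2} directly, with the identity function as the index map. Let $g_E\in E$ and $g_F\in F$ denote the unique points of $E^{(n)}$ and $F^{(n)}$, respectively, so the given \sP-bases sit at these points. Stability of $E$ gives homeomorphisms $V_\alpha\setminus V_{\alpha+1}\cong V_0\setminus V_1$ for every $\alpha<\omega_1$, and likewise for $F$; composing with the assumed $V_0\setminus V_1\cong W_0\setminus W_1$, I obtain homeomorphisms $h_\alpha\colon V_\alpha\setminus V_{\alpha+1}\to W_\alpha\setminus W_{\alpha+1}$ for every $\alpha<\omega_1$.

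Next, I would invoke Lemma \ref{lem:2} with $f=\mathrm{id}_{\omega_1}$ and $F_\alpha=h_\alpha$. This produces an embedding $\Phi\colon E\to F$ with $\Phi(g_E)=g_F$ whose restriction to each slice is $h_\alpha$. Because $f$ is a bijection and each $h_\alpha$ is surjective onto the target slice, and because $F\setminus\{g_F\}=\bigcup_{\alpha<\omega_1}(W_\alpha\setminus W_{\alpha+1})$ by the \sP-base axioms, the map $\Phi$ is a bijection onto $F$. The proof of Lemma \ref{lem:2} already verifies that $\Phi$ is continuous at $g_E$ and that $\Phi^{-1}$ is continuous at $g_F$; on the clopen slices $V_\alpha\setminus V_{\alpha+1}$ the map $\Phi$ coincides with the homeomorphism $h_\alpha$ onto the clopen slice $W_\alpha\setminus W_{\alpha+1}$, so $\Phi$ is a local homeomorphism at every point of $E\setminus\{g_E\}$. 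Hence $\Phi\colon E\to F$ is a homeomorphism.

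I do not foresee any serious obstacle, since the corollary is essentially a uniqueness-up-to-homeomorphism repackaging of Lemma \ref{lem:2} in the special case where the slice embeddings are homeomorphisms onto and the index map is bijective. The only routine verification is the partition identity $E=\{g_E\}\cup\bigcup_{\alpha<\omega_1}(V_\alpha\setminus V_{\alpha+1})$ required as hypothesis of Lemma \ref{lem:2}; this follows from $\bigcap_{\alpha<\omega_1}V_\alpha=\{g_E\}$ together with the limit clause $V_\alpha=\bigcap_{\beta<\alpha}V_\beta$ in the definition of a \sP-base, which guarantees that each $z\in E\setminus\{g_E\}$ falls into exactly one slice indexed by the least $\alpha$ with $z\notin V_{\alpha+1}$.
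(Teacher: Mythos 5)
Your proposal is correct and is exactly the argument the paper intends: the corollary is stated with no written proof beyond ``By Lemma \ref{lem:2}'', and your instantiation of that lemma with $f=\mathrm{id}_{\omega_1}$ and the slice homeomorphisms supplied by stability plus the hypothesis $V_0\setminus V_1\cong W_0\setminus W_1$ is the intended route, including the observation that surjectivity of the slice maps upgrades the embedding to a homeomorphism.
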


In the class of all elementary sets with Cantor--Bendixson rank $3$ there is countably many elementary sets having different dimensional types.
For example, spaces $i(3)\oplus\bigoplus_n J(2)$  are of different dimensional types, depending on $n$.

 \begin{lem}\label{lem:7}
 For each $n<\omega$, there exist only finitely many non-ho\-meo\-mor\-phic  $n$-stable sets.
Also,   any elementary set with finite Cantor--Bendixson rank is the sum of a family of stable sets.
\end{lem}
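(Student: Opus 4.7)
We prove both parts together by induction on $n$. The base $n=0$ is trivial, since the only $0$-stable set and the only rank-$1$ elementary set are singletons.

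For the inductive step fix $n \geq 1$ and assume both assertions for ranks below $n+1$. To prove the first, let $E$ be $n$-stable with a witnessing \sP-base $\{V_\alpha\colon\alpha<\omega_1\}$ at $g$ and all slices $\cong S$. Because $g\in E^{(n)}$, every $V_\alpha$ meets $E^{(n-1)}$, so $S$ has rank exactly $n$. By Lemma~\ref{lem:4} together with the inductive second assertion, $S\cong \bigoplus_{i=1}^{m}\mu_i T_i$ where $T_1,\ldots,T_m$ exhaust (by the inductive first assertion, finitely many) stable types of rank $\leq n$ and $\mu_i\leq\omega_1$. Passing to the subfamily $\{V_{\omega\cdot\alpha}\colon\alpha<\omega_1\}$ is again a stability witness, but now with slices $\bigoplus_\omega S$, hence with multiplicities $\omega\cdot\mu_i$; since $\omega\cdot\mu=\omega$ for $1\leq\mu\leq\omega$ and $\omega\cdot\omega_1=\omega_1$, only the coarse class $\tilde\mu_i\in\{0,\omega,\omega_1\}$ is retained. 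By Corollary~\ref{cor:6}, $E$ is determined by the tuple $(\tilde\mu_1,\ldots,\tilde\mu_m)\in\{0,\omega,\omega_1\}^m$, giving at most $3^m$ classes.

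For the second assertion, let $E$ be elementary with $N(E)=n+1$, $E^{(n)}=\{g\}$, and fix a \sP-base $\{V_\alpha\}$ at $g$. Each slice $S_\alpha$ is clopen with $N(S_\alpha)\leq n$, so by Lemma~\ref{lem:4} and the inductive second assertion it is a sum of stable sets; hence $E\setminus\{g\}$ is already a sum of stable sets and it suffices to absorb $g$ into a clopen stable $E_0$. Label each $S_\alpha$ by its coarse profile with respect to the finite stable-type list from the first assertion; by pigeonhole on $\omega_1$ there is a club $\{\alpha_\beta\colon\beta<\omega_1\}$ along which all slices share a single profile $\sigma$. For each $\beta$ extract a clopen $W_\beta\subseteq V_{\alpha_\beta}\setminus V_{\alpha_{\beta+1}}$ realising a fixed canonical representative of $\sigma$ (of rank $n$, since $\sigma$ must involve a rank-$n$ type), and set $E_0=\{g\}\cup\bigcup_\beta W_\beta$, $E_1=E\setminus E_0$. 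Then $V'_\beta=\{g\}\cup\bigcup_{\gamma\geq\beta}W_\gamma$ is a \sP-base at $g$ in $E_0$ with all slices homeomorphic, so $E_0$ is stable by Corollary~\ref{cor:6}; while $E_1$ is clopen with $N(E_1)\leq n$, hence a sum of stable sets by Lemma~\ref{lem:4} and the induction. Thus $E=E_0\oplus E_1$ is the required decomposition.

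The principal obstacle is the extraction step: one must simultaneously choose clopen chunks $W_\beta$ from cofinally many slices, each homeomorphic to a single canonical representative of $\sigma$, while ensuring both that $E_0$ is clopen in $E$ and that $\{V'_\beta\}$ satisfies the \sP-base limit axiom $V'_\lambda=\bigcap_{\beta<\lambda}V'_\beta$ at every limit ordinal $\lambda<\omega_1$. Choosing $\{\alpha_\beta\}$ to be closed under countable suprema (that is, a club) handles the limit condition, and the finite enumeration of coarse profiles from the first assertion guarantees that the canonical $W_\beta$ can indeed be extracted from every slice with profile $\sigma$. Once these choices are locked in, the remaining verifications are routine bookkeeping.
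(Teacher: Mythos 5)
Your first assertion (finiteness of the $n$-stable types) is essentially the paper's argument and is fine: regroup the witnessing base along $\{V_{\omega\cdot\alpha}\colon\alpha<\omega_1\}$ to coarsen all multiplicities into $\{0,\omega,\omega_1\}$ and invoke Corollary~\ref{cor:6}. The second assertion, however, has a genuine gap at exactly the point you flag as the principal obstacle. The set $E_0=\{g\}\cup\bigcup_\beta W_\beta$ is closed in $E$ but in general not open: unless the $W_\beta$ exhaust every slice from some index on, every basic neighbourhood $V_\alpha$ of $g$ meets the residue left behind (the unselected slices and the parts $S_{\alpha_\beta}\setminus W_\beta$), so $g$ is an accumulation point of $E_1=E\setminus E_0$ and $E_0$ contains no $V_\alpha$. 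Hence $E=E_0\oplus E_1$ is not a topological sum, and no such transversal can ever be the summand containing $g$ in a sum decomposition, since that summand, being open, must contain a whole tail $V_\alpha$. The club device does not repair this (and the set of indices carrying a fixed profile need not contain a club at all --- think of successor versus limit indices; fortunately the limit axiom for your $V'_\beta$ is automatic anyway, because the $W_\gamma$ are pairwise disjoint). There is a second, independent defect in the extraction step: a slice whose \emph{coarse} profile records $\tilde\mu_i=\omega$ may contain only finitely many copies of $T_i$, so no clopen copy of the ``canonical representative'' of $\sigma$ exists inside that single slice; the identification of the multiplicities $1,2,\ldots,\omega$ is legitimate only after amalgamating $\omega$ many consecutive slices.

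The paper resolves both difficulties by regrouping instead of extracting. For each of the finitely many lower-rank stable types $Y$ it finds a threshold beyond which $Y$ occurs, in suitably amalgamated blocks of slices, either not at all, or exactly $\omega$ times, or $\omega_1$ times, and then chooses an increasing $f\colon\omega_1\to\omega_1$ past all thresholds so that every regrouped slice $V_{f(\alpha)}\setminus V_{f(\alpha+1)}$ realises the \emph{same} coarse tuple exactly. The whole tail $V_{f(0)}$ is then $n$-stable and is automatically clopen, while the head $E\setminus V_{f(0)}$ is clopen of Cantor--Bendixson rank at most $n$ and is handled by Lemma~\ref{lem:4} and the induction hypothesis. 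Your argument becomes correct once the transversal $E_0$ is replaced by such a tail.
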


\begin{proof}
We proceed by induction on $n$.
If $n=1$ and $E$ is an elementary set with  $N(E)=1$, then  there is nothing to do.
Let $\calF_k$ be a family which consists of  all, up to homeomorphism, $k$-stable sets.
Suppose that  the family  $\calF_k$ is finite, for each $k<n$, and  any elementary set with Cantor--Bendixson rank $<n$ is the sum of a family of stable sets.
Suppose $E$ is an elementary set with $E^{(n)}=\{g\}$.
Fix a \sP-base  $\calB=\{V_\alpha\colon \alpha<\omega_1\}$ at $g\in E$.
By the induction hypothesis, assume that each slice $V_\alpha\setminus V_{\alpha+1}$ is  the sum of a family $\calU_\alpha$  of  elements from $\bigcup\{\calF_k\colon k<n\}$.
Let us  define a subsequence of $\calB$ as follows.

\begin{enumerate}
\item[-] If $Y\in\bigcup\{\calF_m\colon m<n\}$ appears only in  countably many families $\calU_\alpha$, then there exists $\beta_Y<\omega_1$ such that there is no $Y$ in $\calU_\alpha$, where $\beta_Y<\alpha<\omega_1$.
  \item[-] If $Y\in\bigcup\{\calF_m\colon m<n\}$ appears uncountable many times only in  countably many $\calU_\alpha$,  then there exists $\gamma_Y<\omega_1$ such that each $\calU_\alpha$ contains at most countably many copies of $Y$, where $\gamma_Y<\alpha<\omega_1$.
  \item[-] For the rest of $Y\in\bigcup\{\calF_k\colon k<n\}$, put $\gamma_Y=\beta_Y=0$.
  \item[-] Having defined $\beta_Y$ and $\gamma_Y$ for each $Y\in\bigcup\{\calF_k\colon k<n\}$, choose an increasing function $f\colon\omega_1\to\omega_1$  such that
    $$\textstyle f(0)>\max\{\max\{\gamma_Y,\beta_Y\}\colon Y\in\bigcup\{\calF_k\colon k<n\}\}$$ and there exists $\tau\in\{0,\omega,\omega_1\}$ such that $Y$ appears $\tau$-many times in each  slice $V_{f(\alpha)}\setminus V_{f(\alpha+1)}$.
  \end{enumerate}
Clearly, the set $V_{f(0)}$ is $n$-stable.
Since  the family $\bigcup\{\calF_k\colon k<n\}$ is  finite, it follows that  the family of all  $n$-stable sets is finite.
By the induction hypothesis and Lemma \ref{lem:4},  the set $E\setminus V_{f(0)}$ is the sum of a family of stable  sets.
  \end{proof}

For technical reasons, the sum  $\bigoplus_0X$ is understood as  the empty set.
  
  \begin{lem}\label{lem:10}
  If $X$ is a scattered \sP-spaces with finite Cantor--Bendixson rank, then  there exists a partition $$\textstyle X\cong \bigoplus_{\kappa_1 }F_1\oplus\ldots\oplus\bigoplus_{\kappa_m }F_m,$$
   where $F_1,\ldots,F_m$ are all stable sets with Cantor--Bendixson rank not greater than $N(X)$ and $\kappa_i \in\omega\cup\{\omega,\omega_1\}$.
  \end{lem}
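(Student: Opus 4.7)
The plan is to chain Lemma \ref{lem:4} with Lemma \ref{lem:7} to decompose $X$ into clopen stable pieces, then regroup the pieces by dimensional type.

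First, I would apply Lemma \ref{lem:4} to the trivial open cover $\{X\}$ to obtain a partition of $X$ into clopen elementary sets $E_\gamma$, each with $N(E_\gamma) \leq N(X) < \omega$. Because each $E_\gamma$ has finite Cantor--Bendixson rank, the second assertion of Lemma \ref{lem:7} writes each $E_\gamma$ as the sum of a family of stable sets, and these stable subsets are clopen in $E_\gamma$, hence clopen in $X$. Concatenating these decompositions across all $\gamma$ yields a partition $\{S_\delta \colon \delta \in \Delta\}$ of $X$ into clopen stable sets with $N(S_\delta) \leq N(X)$. Because each piece is clopen in $X$, arbitrary unions of pieces remain open, and therefore $X$ is homeomorphic to the topological sum $\bigoplus_{\delta \in \Delta} S_\delta$.

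Next, I would appeal to the first assertion of Lemma \ref{lem:7} (applied for each $k < N(X)$) to conclude that, up to homeomorphism, there are only finitely many stable sets with rank at most $N(X)$; enumerate them $F_1, \ldots, F_m$. For each $i$, put $\Delta_i = \{\delta \in \Delta \colon S_\delta \cong F_i\}$ and $\kappa_i = |\Delta_i|$. Regrouping the summands according to this partition of $\Delta$ gives
$$X \cong \bigoplus_{i=1}^{m} \bigoplus_{\kappa_i} F_i.$$
By Proposition \ref{pro:1} we have $|X| \leq \omega_1$, so $|\Delta| \leq \omega_1$ and hence each $\kappa_i$ is a cardinal not exceeding $\omega_1$, i.e.\ either a finite natural number or one of $\omega$, $\omega_1$. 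A stable type $F_i$ that does not occur at all in the partition is accommodated by the convention $\bigoplus_0 F_i = \emptyset$ recorded just before the statement.

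No step constitutes a serious obstacle once Lemmas \ref{lem:4} and \ref{lem:7} are in hand; the only thing to check with care is that the reshuffling of clopen pieces into groups indexed by dimensional type really produces the topological sum, and this is immediate because the $S_\delta$ form a clopen partition of $X$.
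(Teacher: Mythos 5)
Your proposal is correct and follows essentially the same route as the paper's proof: apply Lemma \ref{lem:4} to partition $X$ into elementary sets, use Lemma \ref{lem:7} to refine each into stable sets and to bound the number of stable types, then regroup by homeomorphism type with multiplicities $\kappa_i\in\omega\cup\{\omega,\omega_1\}$. The extra care you take with clopenness of the pieces and the cardinality bound via Proposition \ref{pro:1} is sound but adds nothing the paper's argument lacks.
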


  \begin{proof}
  Let $X$ be a scattered \sP-space with $N(X)=n$.
  By Lemma \ref{lem:4}, the space $X$ is the sum of a family $\calF$ of elementary sets.
  By Lemma \ref{lem:7}, each $E\in\calF$ is the sum of a family of stable sets.
  Thus, $X$ is the sum of a family of $k$-stable sets, where $k<n$.
Therefore  (again by Lemma \ref{lem:7}), if $F_1,\ldots,F_m$ is a sequence of all (up to homeomorphism) $k$-stable sets, where $k<n$, then 
  $$\textstyle X\cong \bigoplus_{\kappa_1 }F_1\oplus\ldots\oplus\bigoplus_{\kappa_m }F_m,$$
   where  and $\kappa_i \in\omega\cup\{\omega,\omega_1\}$.
  \end{proof}
  
  \begin{thm}
  There are at most countably many non-homeomorphic scattered \sP-spaces with finite Cantor--Bendixson rank.
  \end{thm}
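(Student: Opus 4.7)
The plan is to bound the number of isomorphism classes by direct counting, leveraging Lemma \ref{lem:10}. The key observation is that Lemma \ref{lem:10} reduces the problem from classifying arbitrary scattered $P$-spaces of finite rank to enumerating finite tuples of ``multiplicities'' with values in a fixed countable set.

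First I would fix $n<\omega$ and count scattered $P$-spaces $X$ with $N(X)=n$. By Lemma \ref{lem:7}, the collection of stable sets with Cantor--Bendixson rank at most $n$ is finite, say $F_1,\dots,F_{m(n)}$. By Lemma \ref{lem:10}, every scattered $P$-space with Cantor--Bendixson rank at most $n$ is homeomorphic to some
$$\textstyle\bigoplus_{\kappa_1}F_1\oplus\cdots\oplus\bigoplus_{\kappa_{m(n)}}F_{m(n)},$$
with each $\kappa_i\in\omega\cup\{\omega,\omega_1\}$. Since this latter index set is countable and $m(n)$ is finite, there are at most countably many such tuples $(\kappa_1,\dots,\kappa_{m(n)})$. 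Distinct homeomorphism classes with Cantor--Bendixson rank at most $n$ must correspond to distinct tuples (up to the assignment of a tuple), so there are at most countably many of them.

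Finally, I would take the union over all $n<\omega$: every scattered $P$-space with finite Cantor--Bendixson rank has $N(X)=n$ for some $n<\omega$, so the family of all such spaces up to homeomorphism is a countable union of countable sets, hence countable.

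There is essentially no obstacle here, since all the substantive work has been done in Lemmas \ref{lem:7} and \ref{lem:10}; the only thing worth flagging is that we do not need the representation in Lemma \ref{lem:10} to be unique — the argument only requires that the map from tuples to homeomorphism classes be surjective, which gives the desired upper bound on cardinality.
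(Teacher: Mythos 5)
Your argument is correct and is essentially identical to the paper's own proof: both invoke Lemma \ref{lem:10} to represent each space of finite rank as a sum of the finitely many stable sets (Lemma \ref{lem:7}) with multiplicities in the countable set $\omega\cup\{\omega,\omega_1\}$, count the tuples, and take a countable union over $n$. Your closing remark that only surjectivity of the tuple-to-homeomorphism-class assignment is needed is a correct and worthwhile clarification.
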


  \begin{proof}
    If $X$ is a scattered \sP-space with $N(X)<n$, then $X$ is homeomorphic to the sum as in Lemma \ref{lem:10}, where $N(F_i)<n$ for each $F_i$.
    There are at most finitely many $k$-stable sets with $k<n$, hence there exist at most countably many such sums determined by the number of occurrences of $k$-stable sets with $k<n$, which suffices to finish the proof.
  \end{proof}
  \begin{cor}\label{cor:12}
  There exist countably many dimensional types of scattered \sP-spaces with finite Cantor--Bendixson rank.\qed
  \end{cor}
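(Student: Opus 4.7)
The plan is to derive Corollary \ref{cor:12} from the preceding theorem with a small additional observation about the Cantor--Bendixson rank. Since homeomorphic spaces have, a fortiori, the same dimensional type, the estimate ``at most countably many non-homeomorphic'' supplied by the preceding theorem immediately yields at most countably many dimensional types.

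To see that there are in fact countably \emph{infinitely} many, I would use the monotonicity of the Cantor--Bendixson rank under homeomorphic embedding: if $X\subset_h Y$, then $N(X)\le N(Y)$. This is proved by induction on $\alpha$, verifying that whenever $X$ is a subspace of $Y$ one has $X^{(\alpha)}\subseteq Y^{(\alpha)}\cap X$. At a successor step, any point accumulating in $X^{(\alpha)}$ with respect to the subspace topology also accumulates in $Y^{(\alpha)}$, since each neighbourhood in $X$ is the trace of a neighbourhood in $Y$; at a limit step one merely intersects the inclusions given by the inductive hypothesis. Consequently $X=_h Y$ forces $N(X)=N(Y)$, and so spaces of different finite ranks fall into different dimensional types.

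Finally, the concrete spaces $i(n)$ introduced earlier in Section \ref{sec:3} satisfy $N(i(n))=n$ for every $n<\omega$, and each is a scattered \sP-space of finite Cantor--Bendixson rank. They therefore witness countably infinitely many distinct dimensional types, matching the upper bound. No serious obstacle arises; the only step worth recording explicitly is the rank-monotonicity observation under homeomorphic embeddings, which is a routine transfinite induction.
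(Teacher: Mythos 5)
Your proposal is correct and is essentially the argument the paper intends by attaching \qed to the statement: the upper bound is immediate from the preceding theorem, and distinct finite ranks give distinct dimensional types. The rank-monotonicity observation you prove by transfinite induction is exactly the paper's later Lemma \ref{lem:23}, and the witnesses $i(n)$ (or $J(n)$, or the family $\bigoplus_n J(2)$ mentioned in the proof of Corollary \ref{cor:11}) settle the infinitude as you say.
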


  \section{Dimensional type of \sP-spaces with finite Cantor--Bendixson rank}
 Some technical problems of \sP-spaces with countable Cantor--Bendixson rank can be reduced to studying \sP-spaces with
 finite Cantor--Bendixson rank.

\begin{pro}\label{pro:8}
  If $Y$ is an elementary \sP-space of weight $\omega_1$ with $N(Y)=n$, then $Y\subset_h J(n)$.
\end{pro}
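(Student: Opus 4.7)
The plan is to induct on $n = N(Y)$, with trivial base case $n=1$ (both $Y$ and $J(1)$ are singletons). For the inductive step with $n\geq 2$, I would begin with two preliminary observations. Since $Y$ is elementary of rank $n$, the set $Y^{(n-1)}$ is a single point $y$; fix a \sP-base $\{V_\alpha\colon \alpha<\omega_1\}$ at $y$ and set $S_\alpha = V_\alpha\setminus V_{\alpha+1}$. Each $S_\alpha$ is clopen in $Y$ and misses $y$, so by induction on $k$ one checks that $S_\alpha^{(k)} = S_\alpha\cap Y^{(k)}$, giving $N(S_\alpha)\leq n-1$. Secondly, from the recursive definition of $J(m)$ each space $J(m-1)$ occurs as a clopen summand of every slice of $J(m)$, so $J(m-1)\subset_h J(m)$; iterating this yields the monotonicity $J(k)\subset_h J(n-1)$ for every $k\leq n-1$.

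Next, I would fix the \sP-base $\{W_\alpha\colon \alpha<\omega_1\}$ at the distinguished point $x\in J(n)^{(n-1)}$ provided by the definition of $J(n)$, so that $W_\alpha\setminus W_{\alpha+1}\cong \bigoplus_{\omega_1} J(n-1)$ for each $\alpha$. The strategy is to construct, for each $\alpha<\omega_1$, an embedding $F_\alpha\colon S_\alpha\to W_\alpha\setminus W_{\alpha+1}$, and then to apply Lemma \ref{lem:2} with the injection $f=\mathrm{id}_{\omega_1}$ to assemble the $F_\alpha$ into the desired embedding $Y\hookrightarrow J(n)$. The required decomposition $Y = \{y\}\cup\bigcup_{\alpha<\omega_1} S_\alpha$ is immediate from the \sP-base properties: any $p\neq y$ leaves some $V_{\alpha_0}$ at a successor stage $\alpha_0 = \gamma+1$ (limit stages are closed under intersection), so $p\in S_\gamma$.

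To build $F_\alpha$, I would apply Lemma \ref{lem:4} to partition $S_\alpha$ into at most $\omega_1$ many elementary sets $\{E_\beta\}$, each of rank at most $n-1$. By the induction hypothesis an elementary set of rank $k$ embeds into $J(k)$, and by the preliminary monotonicity $J(k)\subset_h J(n-1)$, so each $E_\beta$ embeds into $J(n-1)$. Sending the $E_\beta$ into pairwise distinct copies of $J(n-1)$ inside $\bigoplus_{\omega_1} J(n-1)\cong W_\alpha\setminus W_{\alpha+1}$ yields the required $F_\alpha$. The main obstacle is the auxiliary monotonicity $J(k)\subset_h J(n-1)$; once it is in hand by unwinding the recursive definition of $J(m)$, the remainder is routine bookkeeping to match slices of $Y$ against slices of $J(n)$ via Lemma \ref{lem:2}.
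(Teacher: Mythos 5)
Your proposal is correct and follows essentially the same route as the paper: induct on $n$, fix \sP-bases at the top points of $Y$ and $J(n)$, use Lemma \ref{lem:4} to split each slice of $Y$ into elementary sets of rank $\le n-1$, embed these into $\bigoplus_{\omega_1}J(n-1)$ by the induction hypothesis, and glue via Lemma \ref{lem:2}. You are in fact slightly more careful than the paper in spelling out the monotonicity $J(k)\subset_h J(n-1)$ for $k\le n-1$, which the paper uses implicitly.
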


\begin{proof}
If $n<2$, then there is nothing to do.
 If $N(Y)=2$, then check that $Y\subset_h J(2)$,  using Lemma \ref{lem:2}.
  Suppose  that the hypothesis is fulfilled for each $k<n $.
Fix a \sP-base $\{W_\gamma\colon\gamma<\omega_1\}$ at the point $y\in Y^{(n-1)}$ and fix a \sP-base $\{V_\gamma\colon\gamma<\omega_1\}$   at the point $x\in J(n)^{(n-1 )}$ such that  $V_\gamma\setminus V_{\gamma+1}=\bigoplus_{\omega_1}J(n-1)$ for each $\gamma<\omega_1$.
  By Lemma \ref{lem:4},
  $$W_\gamma\setminus W_{\gamma+1}\cong \bigoplus\{E_\mu\colon\mu<\omega_1\},$$
  where subsets $E_\mu\subseteq Y$ are  elementary  and with $N(E_\mu)\leq n-1 $.
So, by the induction hypothesis, there exist embeddings
  $$f|_{W_\gamma \setminus W_{\gamma +1}}\colon W_\gamma \setminus W_{\gamma +1}\to V_\gamma \setminus V_{\gamma +1}.$$
Putting $f(y)=x$, we are done.
\end{proof}

\begin{pro}\label{pro:9}
If $X$ is a scattered \sP-space of weight $\omega_1$ such that  $X^{(2n)}=\{g\}$, then $J(n+1)\subset_hX$.
\end{pro}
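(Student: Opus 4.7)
The plan is to argue by induction on $n$. The base case $n=0$ is trivial, since then $X=\{g\}\cong J(1)$. For the inductive step, assume the statement for $n-1$ and suppose $X^{(2n)}=\{g\}$. Because $X^{(2n+1)}=\emptyset$, the space $X$ is itself an elementary set with $N(X)=2n+1$, so Proposition \ref{pro:5} is applicable inside $X$ and inside each of its clopen subsets. I will also repeatedly use the standard scattered-space fact that for any clopen $E\subseteq X$ and any $k$, $E^{(k)}=X^{(k)}\cap E$.

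Fix an arbitrary \sP-base $\{W_\alpha\colon\alpha<\omega_1\}$ at $g$ and set
\[
S=\{\alpha<\omega_1\colon (W_\alpha\setminus W_{\alpha+1})\cap X^{(2n-1)}\neq\emptyset\}.
\]
The first crucial step is to show $|S|=\omega_1$. If $S$ were countable, then $C=\bigcup_{\alpha\in S}(W_\alpha\setminus W_{\alpha+1})$ would be clopen in $X$ (a countable union of clopen sets in a weight-$\omega_1$ \sP-space), and every point of $X^{(2n-1)}\setminus\{g\}$ would lie in $C$, since every such point sits in some slice and that slice would have to be indexed by $S$. Consequently $X\setminus C$ would be a clopen neighbourhood of $g$ disjoint from $X^{(2n-1)}\setminus\{g\}$, contradicting $g\in X^{(2n)}$.

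For each $\alpha\in S$ I would pick $p_\alpha\in(W_\alpha\setminus W_{\alpha+1})\cap X^{(2n-1)}$. Since $p_\alpha\neq g$, the point $p_\alpha$ lies in $X^{(2n-1)}\setminus X^{(2n)}$, hence is isolated in $X^{(2n-1)}$, so there is a clopen $U\ni p_\alpha$ with $U\cap X^{(2n-1)}=\{p_\alpha\}$; setting $E_\alpha=U\cap(W_\alpha\setminus W_{\alpha+1})$ yields a clopen elementary subset of $X$ with $N(E_\alpha)=2n$, contained in the slice. Proposition \ref{pro:5} applied to $E_\alpha$ with parameter $2n-2$ (note $2n-1<2n=N(E_\alpha)$) then produces uncountably many pairwise disjoint elementary sets $F_{\alpha,\beta}\subseteq E_\alpha$ with $N(F_{\alpha,\beta})=2n-1$, that is, $F_{\alpha,\beta}^{(2(n-1))}$ is a singleton. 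The induction hypothesis applied to each $F_{\alpha,\beta}$ gives $J(n)\subset_h F_{\alpha,\beta}$, so $\bigoplus_{\omega_1}J(n)$ embeds into every slice $W_\alpha\setminus W_{\alpha+1}$ with $\alpha\in S$.

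To conclude, I would enumerate $S$ increasingly as $\{\alpha_\gamma\colon\gamma<\omega_1\}$, take a \sP-base $\{V_\gamma\colon\gamma<\omega_1\}$ at the top point $x$ of $J(n+1)$ with slices $V_\gamma\setminus V_{\gamma+1}\cong\bigoplus_{\omega_1}J(n)$, define $f(\gamma)=\alpha_\gamma$, and on the $\gamma$-th slice of $J(n+1)$ use the embedding into $W_{\alpha_\gamma}\setminus W_{\alpha_\gamma+1}$ obtained in the previous paragraph. Lemma \ref{lem:2} then delivers $J(n+1)\subset_h X$. The main obstacle is the cofinality claim $|S|=\omega_1$, which rests on the characteristic \sP-space property that countable unions of clopen sets remain clopen; after that point the argument is essentially a double use of Proposition \ref{pro:5} combined with the induction hypothesis.
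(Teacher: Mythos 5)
Your proof is correct and follows essentially the same inductive strategy as the paper's: step the derivative index down by two, produce $\omega_1$ pairwise disjoint elementary sets of rank $2n-1$ inside $\omega_1$ many slices at $g$, apply the induction hypothesis to get copies of $J(n)$ there, and glue with Lemma~\ref{lem:2}. The only real difference is that you justify explicitly (via the cofinality of the set $S$ and Proposition~\ref{pro:5}) the claim that uncountably many slices each contain $\omega_1$ disjoint rank-$(2n-1)$ elementary sets, a point the paper simply asserts after passing to $X^*=\{g\}\cup(X\setminus X^{(2n-1)})$.
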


\begin{proof}
If $n=1$, then $J(2)\cong (X\setminus X^{(1)})\cup\{g\}.$
  Assume that the hypothesis is fulfilled for any scattered \sP-space $Y$ such that $Y^{(2n-2)}=\{y\}$.
  Let
  $$X^*=\{g\}\cup(X\setminus X^{(2n-1)}).$$
  The point $g\in X^*$ has a \sP-base $\{V_\alpha\colon \alpha<\omega_1\}$ such that each slice $V_\alpha\setminus V_{\alpha+1}$ contains $\omega_1$ many pairwise disjoint elementary sets, each with Cantor--Bendixson rank $2n-1$.
  By the induction hypothesis, any such elementary set contains a copy of  $J(n)$, hence $J(n+1)\subset_hX^*$.
\end{proof}

 If $k\geq 2n$ and  $X^{(k)}=\{g\}$, then there exists $Y\subseteq X$ such that the derivative $Y^{(2n)}$ is a singleton, hence $J(n+1)\subset_hY\subseteq X$.
 The assumption $k=2n$ is minimal.
 Indeed, if $n=2$ and $k=3$, then $i(4)$ and $J(3)$ have incomparable dimensional types.

  \begin{thm}\label{thm:10}
    Let $(\calF,\subset_h)$ be an ordered set, where $\calF$ is a family of   scattered \sP-spaces of weight $\omega_1$ with Cantor--Bendixson ranks $\leq n$.
    Then every antichain is finite and every strictly decreasing chain   is finite.
 \end{thm}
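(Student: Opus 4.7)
The plan is to code each member of $\calF$ by a tuple in $P^m$, where $P=\omega\cup\{\omega,\omega_1\}$ is linearly well-ordered and $m<\omega$ is the cardinality, guaranteed by Lemma \ref{lem:7}, of the family of all stable sets, up to homeomorphism, of Cantor--Bendixson rank at most $n$. I would fix an enumeration $F_1,\ldots,F_m$ of these stable sets, and then, using Lemma \ref{lem:10}, attach to each $X\in\calF$ a tuple $\phi(X)=(\kappa_1(X),\ldots,\kappa_m(X))\in P^m$ witnessing a homeomorphism $X\cong\bigoplus_{\kappa_1(X)}F_1\oplus\ldots\oplus\bigoplus_{\kappa_m(X)}F_m$. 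Two consequences of this coding drive the proof: if $\phi(X)=\phi(Y)$ then $X\cong Y$, since both are given by the same formal sum; and if $\phi(X)\preceq\phi(Y)$ coordinate-wise, then $X\subset_h Y$, by embedding each copy of $F_i$ appearing in $X$ into a fresh copy of $F_i$ in $Y$.

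The antichain case I would settle by contraposition. If $\calA\subseteq\calF$ were an infinite antichain, then for distinct $X,Y\in\calA$ both $X\not\subset_h Y$ and $Y\not\subset_h X$ would force $\phi(X)$ and $\phi(Y)$ to be strictly incomparable in $(P^m,\preceq)$, so $\phi$ would map $\calA$ injectively to an infinite antichain in $P^m$, contradicting Lemma \ref{lem:3}.

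For a putative infinite strictly decreasing chain $X_1,X_2,\ldots$ in $(\calF,\subset_h)$ --- meaning $X_{i+1}\subset_h X_i$ and $X_i\not\subset_h X_{i+1}$ for each $i$ --- a short transitivity argument yields $X_i\not\subset_h X_j$ for all $i<j$: otherwise $X_i\subset_h X_j$ combined with $X_j\subset_h X_i$ gives $X_i=_h X_j$, which propagates to $X_i=_h X_{i+1}=_h\ldots=_h X_j$, contradicting the strictness of the very first step. Applying Lemma \ref{lem:3} to the infinite set $\{\phi(X_i)\colon i<\omega\}\subseteq P^m$ then furnishes indices $i<j$ with $\phi(X_i)\preceq\phi(X_j)$, whence $X_i\subset_h X_j$, a contradiction.

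The point requiring care is that $\phi$ is only one-directional: $\phi(X)\preceq\phi(Y)$ implies $X\subset_h Y$, but not conversely, since for instance the discrete space of cardinality $\omega_1$ already embeds into $J(2)$ while their coding tuples are $\preceq$-incomparable. This asymmetry is harmless because Lemma \ref{lem:3} outlaws both infinite antichains and infinite strictly decreasing sequences in $(P^m,\preceq)$, so the contrapositive of the monotonicity of $\phi$ is enough to draw the conclusion.
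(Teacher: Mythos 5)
Your proposal is correct and is built on the same coding as the paper: Lemma \ref{lem:10} turns each $X\in\calF$ into a tuple $\phi(X)$ of multiplicities (from $\omega\cup\{\omega,\omega_1\}$) of the finitely many stable sets of rank $\leq n$, and your antichain argument is exactly the paper's, namely pushing a $\subset_h$-antichain through the contrapositive of $\phi(X)\preceq\phi(Y)\Rightarrow X\subset_h Y$ into an antichain of the coordinate-wise order and invoking Lemma \ref{lem:3}. Where you genuinely diverge is the decreasing-chain case. The paper establishes a second implication --- $X\subset_h Y$ and $X\neq_h Y$ force $\kappa^X_i<\kappa^Y_i$ for some $i$ --- colours pairs of indices by a coordinate that drops, and applies Ramsey's theorem to extract an infinite strictly decreasing sequence in one well-ordered coordinate. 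You instead note that strictness propagates to $X_i\not\subset_h X_j$ for all $i<j$, so the sequence $(\phi(X_i))$ admits no pair $i<j$ with $\phi(X_i)\preceq\phi(X_j)$, which Lemma \ref{lem:3} forbids: the tuples are pairwise distinct (equal tuples would give $X_i\cong X_j$), so the set $\{\phi(X_i)\colon i<\omega\}$ is infinite and contains an infinite $\preceq$-increasing sequence; since an infinite sequence of distinct natural-number indices cannot be strictly decreasing, that increasing sequence yields $k<l$ with $i_k<i_l$ and $\phi(X_{i_k})\preceq\phi(X_{i_l})$, the desired contradiction. This extraction of an increasing pair of \emph{indices}, and the injectivity of $\phi$ on the chain, are the two small steps you leave implicit and should write out. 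The payoff of your route is that it uses only the forward monotonicity of $\phi$ and dispenses with the explicit appeal to Ramsey's theorem; the cost is that it leans on the full first clause of Lemma \ref{lem:3} rather than only its ``in particular'' part.
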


 \begin{proof}
   Let   $X$ be a scattered \sP-space with $N(X)=n$  and $F_1,\ldots,F_m$ be all $k$-stable sets with Cantor--Bendixson rank $k\leq n$.
Applying  Lemma~\ref{lem:10}, fix a partition
   $$\textstyle X=_h\bigoplus_{\kappa_1^X}F_1\oplus\ldots\oplus\bigoplus_{\kappa_m^X}F_m,$$
   where $\kappa_i^X\in\omega\cup\{\omega,\omega_1\}=A$.
Thus we have defined a function $\phi$ such that
$$X\mapsto\phi(X)=(\kappa_1^X,\ldots,\kappa_m^X)\in A^m.$$
Consider the coordinate-wise order $(A^m,\preceq)$. 
Using elementary properties of the sum of spaces, we  have the following implications:
\begin{itemize}
\item[(1)] $\phi(X)\preceq\phi(Y)\Rightarrow X\subset_hY$;
\item[(2)] $X\subset_hY\mbox{ and }X\neq_hY\Rightarrow \exists_i\kappa^X_i<\kappa^Y_i$.
\end{itemize}

Condition (1) implies  that if $\calU\subseteq\calF$ is an antichain with respect to $\subset_h$, then $\{\phi(X)\colon X\in\calU\}$ is an antichain in $(A^m,\preceq)$.
Therefore, by Lemma \ref{lem:3}, there is no infinite antichain in $\calF$.

Suppose that  $(X_n)$ is a strictly decreasing sequence with respect to $\subset_h$.
Put $$B_i=\Big\{\{k,n\}\colon \kappa^{X_{\max\{k,n\}}}_{i}<\kappa^{X_{\min\{k,n\}}}_{i}\Big\}.$$
We have  $[\omega]^2=B_1\cup \ldots\cup B_m$, by condition (2).
According to Ramsey's theorem \cite[Theorem A]{ram},  there exist an infinite subset $N\subseteq\omega$ and $i$ such that $\kappa^{X_{n}}_{i}<\kappa^{X_k}_{i}$ for each $k,n\in N$, where $k<n$.
This contradicts $A$ being well-ordered.
 \end{proof}

 Observe that our proof of Theorem \ref{thm:10} uncovers hidden usage  of Ramsey's theorem in  the proof \cite[Lemma 30]{gil}.

 \begin{cor}\label{cor:11}
  Let $(\calF,\subset_h)$ be an ordered set, where $\calF$ is a family of   scattered \sP-spaces of weight $\omega_1$ with finite Cantor--Bendixson rank.
  Then every antichain is finite and every strictly decreasing chain   is finite.
  However, among spaces of $\calF$, there is $\omega$-many but not $\omega_1$-many different dimensional types.
 \end{cor}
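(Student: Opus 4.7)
The plan is to reduce the statement to Theorem \ref{thm:10} by controlling the unbounded-rank case through an eventual embedding principle. The bounded-rank case is Theorem \ref{thm:10} essentially verbatim, so the real work is to argue that a family containing spaces of arbitrarily large finite Cantor--Bendixson rank cannot form an infinite antichain or an infinite strictly descending chain.

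The key auxiliary claim I would establish is the following: \emph{for every scattered \sP-space $X$ of weight $\omega_1$ with $N(X)=n<\omega$, there is $N<\omega$ such that $X\subset_h Y$ for every scattered \sP-space $Y$ of weight $\omega_1$ with $N(Y)\geq N$.} I would prove this by chaining three embeddings. First, Lemma \ref{lem:10} decomposes $X$ as a finite sum of stable sets of rank $\leq n$; each stable set is elementary, so Proposition \ref{pro:8} gives $X\subset_h\bigoplus_{\omega_1}J(n)$. Second, the definition of $J(n+1)$ makes each slice of its canonical \sP-base at the top point homeomorphic to $\bigoplus_{\omega_1}J(n)$, and slices are clopen, so $\bigoplus_{\omega_1}J(n)\subset_h J(n+1)$. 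Third, if $N(Y)\geq 2n+1$ then $Y^{(N(Y)-1)}$ is nonempty and discrete, so any $g\in Y^{(N(Y)-1)}$ admits a clopen elementary neighborhood $U\subseteq Y$ with $U^{(N(Y)-1)}=\{g\}$; the remark following Proposition \ref{pro:9} applied to $U$ yields $J(n+1)\subset_h U\subseteq Y$. Composing, $N=2n+1$ works.

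With this principle the first two claims follow quickly. If $\calU\subseteq\calF$ is an infinite antichain with ranks bounded by some $n$, Theorem \ref{thm:10} already gives the contradiction; otherwise pick any $X_0\in\calU$ and then $Y\in\calU$ with $N(Y)\geq 2N(X_0)+1$, so that $X_0\subset_h Y$, again contradicting incomparability. For a strictly descending chain $X_1\supsetneq_h X_2\supsetneq_h\ldots$, the ranks $N(X_i)$ form a non-increasing sequence of natural numbers and so stabilize at some $n$; beyond that index the chain is strictly descending within the rank-$\leq n$ subfamily, forbidden by Theorem \ref{thm:10}. For the statement about dimensional types, Corollary \ref{cor:12} already bounds the number of dimensional types of finite-rank scattered \sP-spaces of weight $\omega_1$ by $\omega$, which is strictly less than $\omega_1$; infinitely many are realized, for instance by the finite discrete spaces of cardinalities $1,2,3,\ldots$

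The main obstacle I foresee is the third step of the embedding principle. Proposition \ref{pro:9} is stated under the global hypothesis $X^{(2n)}=\{g\}$, so one must first localize to a clopen elementary neighborhood of a discrete point of $Y^{(N(Y)-1)}$ before it can be invoked. The localization is possible because regular \sP-spaces of weight $\omega_1$ have a base of clopen sets and the top nonempty derivative is discrete, but one has to verify that the resulting clopen subspace $U$ inherits the correct top derivative, i.e.\ that $U^{(k)}=U\cap Y^{(k)}$ for clopen $U$, which is a routine but necessary check.
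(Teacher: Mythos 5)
Your proposal is correct and follows essentially the same route as the paper: the antichain and chain claims are reduced to Theorem \ref{thm:10} by showing that rank $\geq 2N(X)+1$ forces $X\subset_h J(N(X)+1)\subset_h Y$ via Propositions \ref{pro:8} and \ref{pro:9} (with the same localization to an elementary clopen neighbourhood that the paper's remark after Proposition \ref{pro:9} supplies), and the count of dimensional types comes from the finiteness of stable sets of each rank. The only cosmetic difference is your choice of finite discrete spaces, rather than the sums $\bigoplus_m J(2)$, to witness infinitely many dimensional types.
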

 
 \begin{proof}
   Let $\calA$ be an antichain of scattered \sP-spaces with finite Cantor--Bendixson rank.
   If $X,Y\in\calA$, then  $2N(X)<N(Y)$ is impossible.
   Suppose otherwise and put $n=N(X)+1$, then $X\subset_h J(n)$ by Proposition \ref{pro:8}.
   The inequality $N(Y)\geq 2n-1$ and  Proposition \ref{pro:9} imply $J(n)\subset_h Y$, a contradiction.
   Thus $\calA$ has to be finite.

   Suppose $X_1\supset_h X_2\supset_h\ldots$ is a strictly decreasing sequence of scattered \sP-spaces with finite Cantor--Bendixson rank.
   Then all spaces $X_n$ have Cantor--Bendixson rank $\leq N(X_1)$.
   By Theorem \ref{thm:10}, the sequence is finite.

   Observe that if $m\neq n$, then spaces $\bigoplus_mJ(2)$ and $\bigoplus_nJ(2)$ have different dimensional types.
   By Lemma \ref{lem:4} and \ref{lem:7}, there is at most countably many different dimensional types among spaces with Cantor--Bendixson rank $n$, hence no family of dimensional types of spaces with finite Cantor--Bendixson rank can be uncountable.
 \end{proof}

 \section{Maximal elementary  sets}\label{sec:6}

 Proposition \ref{pro:8} states that $J(n)$ is maximal with respect to $\subset_h$ in the class of all \sP-spaces with Cantor--Bendixson rank $\leq n$.
We proceed to a definition of maximal \sP-spaces with infinite Cantor--Bendixson ranks.
Namely, let $J(\omega)$ be the sum of the family $\{J(n)\colon n<\omega\}$, i.e.
$$J(\omega)=\bigoplus\{J(n)\colon n<\omega\}.$$
If $\beta<\omega_2$ and the \sP-space $J(\beta)$ is already defined, then let $J(\beta+1)$ be a \sP-space with $J(\beta+1)^{(\beta)}=\{x\}$ such that there exists a \sP-base $\{V_\alpha\colon \alpha<\omega_1\}$ at $x\in J(\beta+1)$ with all slices $V_\alpha\setminus V_{\alpha+1}$   homeomorphic to the sum $\bigoplus_{\omega_1}J(\beta)$.
If $\beta>\omega$ is a limit ordinal and the space $J(\gamma)$ is defined  for each $\gamma<\beta$,  then put
 $$J(\beta)=\bigoplus\{J(\gamma)\colon\gamma<\beta\}.$$
 Now, we establish  some properties of  $J(\beta)$.

\begin{lem}\label{lem:12}
If  $\gamma<\omega_2$, then  $J(\gamma+1)\cong J(\gamma+1)\oplus \bigoplus_{\omega_1}J(\gamma)$.
 \end{lem}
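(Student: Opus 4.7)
The plan is to exhibit both sides of the asserted homeomorphism as admitting a $P$-base at a common distinguished point whose slices are all homeomorphic to $\bigoplus_{\omega_1}J(\gamma)$, and then to invoke Lemma~\ref{lem:2} in the ``onto'' version to obtain an actual homeomorphism rather than a mere embedding.

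First, I would fix the canonical $P$-base $\{V_\alpha\colon\alpha<\omega_1\}$ at the point $x\in J(\gamma+1)^{(\gamma)}$ guaranteed by the inductive definition of $J(\gamma+1)$, so that each slice $V_\alpha\setminus V_{\alpha+1}$ is homeomorphic to $\bigoplus_{\omega_1}J(\gamma)$. Then I would set $Z=J(\gamma+1)\oplus\bigoplus_{\omega_1}J(\gamma)$ and, viewing $x$ as a point of $Z$ via the natural inclusion, define a new family on $Z$ by $V'_0=Z$ and $V'_\alpha=V_\alpha$ for $1\le\alpha<\omega_1$. Each $V'_\alpha$ is clopen in $Z$ because $J(\gamma+1)$ is a clopen summand; the family is decreasing and, at every limit $\alpha\geq\omega$, the identity $\bigcap_{\beta<\alpha}V'_\beta=Z\cap V_\alpha=V'_\alpha$ holds trivially. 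Since any neighbourhood of $x$ in $Z$ is determined by its restriction to the summand $J(\gamma+1)$, the family $\{V'_\alpha\}$ is a neighbourhood base at $x$, hence a $P$-base.

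Next, I would compute the slices of this new $P$-base. For $\alpha\geq 1$ the slice $V'_\alpha\setminus V'_{\alpha+1}=V_\alpha\setminus V_{\alpha+1}$ is already $\bigoplus_{\omega_1}J(\gamma)$, while the zeroth slice is
$$V'_0\setminus V'_1=(J(\gamma+1)\setminus V_1)\oplus \bigoplus_{\omega_1}J(\gamma)\cong \bigoplus_{\omega_1}J(\gamma)\oplus\bigoplus_{\omega_1}J(\gamma)\cong\bigoplus_{\omega_1}J(\gamma),$$
where the final step uses only the cardinal identity $\omega_1+\omega_1=\omega_1$. Thus every slice of $\{V'_\alpha\}$ at $x\in Z$ matches the corresponding slice of $\{V_\alpha\}$ at $x\in J(\gamma+1)$, and the slices together with $\{x\}$ exhaust $Z$.

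Finally, I would apply Lemma~\ref{lem:2} with $f$ the identity on $\omega_1$ and each $F_\alpha\colon V_\alpha\setminus V_{\alpha+1}\to V'_\alpha\setminus V'_{\alpha+1}$ chosen to be a homeomorphism onto the corresponding slice. The map $F\colon J(\gamma+1)\to Z$ produced by Lemma~\ref{lem:2} is by construction an embedding, and since the $F_\alpha$ are bijective and their ranges together with $\{x\}$ cover $Z$, $F$ is also surjective, hence a homeomorphism. The only delicate verification is that $\{V'_\alpha\}$ really is a $P$-base at $x\in Z$ (in particular, that setting $V'_0=Z$ is compatible with the limit-intersection clause) and the book-keeping behind the cardinality collapse in the zeroth slice; neither presents a real obstacle.
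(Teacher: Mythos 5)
Your proof is correct and rests on the same key fact as the paper's: Lemma~\ref{lem:2}, applied with the identity reindexing and with each $F_\alpha$ a homeomorphism \emph{onto} the corresponding slice, identifies two elementary sets whose $P$-bases have pairwise homeomorphic slices. The paper packages this more briefly by peeling the first slice off $J(\gamma+1)$ itself, writing $J(\gamma+1)=V_1\oplus(V_0\setminus V_1)$ with $V_1\cong J(\gamma+1)$ and $V_0\setminus V_1\cong\bigoplus_{\omega_1}J(\gamma)$; you instead absorb the extra summand into the zeroth slice of the right-hand side, which is the same argument run in the opposite direction.
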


 \begin{proof}
 Let  $\{x\}=J(\gamma+1)^{(\gamma+1)}$ and let $\{V_\alpha\colon\alpha<\omega_1\}$ be a \sP-base at the point $x\in J(\gamma+1)$.
We have
 $$\textstyle V_0\setminus V_1\cong \bigoplus_{\omega_1}J(\gamma)\mbox{ and
 }V_1\cong J(\gamma+1)=V_0,$$
since $V_0\setminus V_1$ is a clopen set,   we are done.
 \end{proof}

\begin{lem}\label{lem:13}
If $\beta\in Lim$, then $J(\beta)\cong\bigoplus_{\omega_1}J(\beta)$. 
\end{lem}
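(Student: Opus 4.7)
The plan is to use Lemma \ref{lem:12} to absorb $\omega_1$ many copies of each lower-rank $J(\gamma)$ into the successor summands of $J(\beta)$, and then finish with elementary cardinal arithmetic on the indexing sets.

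By definition, since $\beta\in Lim$, one has $J(\beta)=\bigoplus\{J(\gamma):\gamma<\beta\}$. Because $\beta$ is a limit, every $\gamma<\beta$ satisfies $\gamma+1<\beta$, so $J(\gamma+1)$ appears as a clopen summand of $J(\beta)$ for every $\gamma<\beta$. Lemma \ref{lem:12} gives $J(\gamma+1)\cong J(\gamma+1)\oplus\bigoplus_{\omega_1}J(\gamma)$. I would apply this replacement simultaneously on every successor summand: since the pieces involved are pairwise disjoint clopen subspaces of $J(\beta)$, the individual homeomorphisms glue into a single homeomorphism of the whole space. The outcome is
\[
J(\beta)\;\cong\;\bigoplus_{\gamma<\beta}J(\gamma)\;\oplus\;\bigoplus_{\gamma<\beta}\bigoplus_{\omega_1}J(\gamma).
\]

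Next, I would rearrange the second summand as $\bigoplus_{\omega_1}\bigoplus_{\gamma<\beta}J(\gamma)\cong\bigoplus_{\omega_1}J(\beta)$, which is legitimate because $|\beta\times\omega_1|=\omega_1$ (using $\omega\leq|\beta|\leq\omega_1$). Hence $J(\beta)\cong J(\beta)\oplus\bigoplus_{\omega_1}J(\beta)$. Since $|1+\omega_1|=\omega_1$, the sum of $1+\omega_1$ copies of $J(\beta)$ is homeomorphic to $\bigoplus_{\omega_1}J(\beta)$, yielding the desired conclusion $J(\beta)\cong\bigoplus_{\omega_1}J(\beta)$.

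The step I would be most careful about is the simultaneous replacement in the main step: one must observe that each local homeomorphism provided by Lemma \ref{lem:12} is supported on one of the clopen summands $J(\gamma+1)\subseteq J(\beta)$, and that these summands are pairwise disjoint, so they combine into a well-defined global homeomorphism. No transfinite induction on $\beta$ is needed; the whole argument is a single absorption step followed by manipulation of indexing cardinals in the sum of spaces.
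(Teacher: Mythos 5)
Your proof is correct and follows essentially the same route as the paper: apply Lemma \ref{lem:12} to every successor summand $J(\gamma+1)$ of $J(\beta)=\bigoplus\{J(\gamma)\colon\gamma<\beta\}$, regroup $\bigoplus_{\gamma<\beta}\bigoplus_{\omega_1}J(\gamma)$ as $\bigoplus_{\omega_1}J(\beta)$, and absorb the extra copy via $1+\omega_1=\omega_1$. The only difference is presentational: the paper dresses this up as a transfinite induction on limit $\beta$ (with $\omega$ as a base case), whereas you correctly observe that the induction hypothesis is never actually needed and a single absorption step suffices.
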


\begin{proof}
Since
$J(\omega)=\bigoplus\{J(n)\colon n\in\omega\}$, using Lemma \ref{lem:12}, we get
$$\textstyle J(\omega)\cong \bigoplus\{J(n)\oplus\bigoplus_{\omega_1}J(n-1)\colon n>0\},$$
therefore $\textstyle J(\omega)\cong \bigoplus_{\omega_1}J(\omega).$

Assume that if $\gamma\in \beta\cap Lim$, then the hypothesis of the lemma is fulfilled.
According to Lemma \ref{lem:12},  we get
\[\textstyle
  J(\beta)= \bigoplus\{J(\gamma+1)\colon \gamma<\beta\}\oplus \bigoplus\{J(\gamma)\colon \gamma\in \beta\cap Lim\}\cong\]
\[\textstyle
  \bigoplus\{J(\gamma+1)\oplus\bigoplus_{\omega_1}J(\gamma)\colon \gamma<\beta\}\oplus \bigoplus\{J(\gamma)\colon \gamma\in\beta\cap Lim\}\cong\]
\[\textstyle
\bigoplus\{J(\gamma+1)\colon \gamma<\beta\}\oplus \bigoplus_{\omega_1}\bigoplus\{J(\gamma)\colon \gamma<\beta\}\oplus \bigoplus\{J(\gamma)\colon \gamma\in\beta\cap Lim\}.\]
Therefore $\textstyle J(\beta)\cong \bigoplus_{\omega_1}\bigoplus\{J(\gamma)\colon \gamma<\beta\}\cong\bigoplus_{\omega_1}J(\beta).$
\end{proof}

In fact, for each $\beta\in Lim$ we have  $$\textstyle J(\beta)\cong\bigoplus\{\bigoplus_{\omega_1}J(\gamma)\colon \gamma<\beta\mbox{ and }\gamma\notin Lim\}.$$

 \section{Dimensional type of \sP-spaces with countable and infinite Cantor--Bendixson rank} \label{sec:7}  

Note that $J(2)$ cannot be embedded as a clopen subset of $i(n)$.
Indeed, if $U\subseteq i(n)$ is a non-discrete clopen subset, then $U$ contains clopen homeomorphic copy of $i(2)$, but $J(2)$ does not contain a clopen homeomorphic copy of $i(2)$.
Consequently no $i(n)$, for $n>1$, can be homeomorphic to  a clopen subset of $J(\omega)$.
Analogously, no $J(n)$, for $n>1$, can be homeomorphic to  a clopen subset of $\bigoplus\{i(n)\colon n<\omega\}$.
So, one can readily check that $J(\omega)$ is not homeomorphic to $\bigoplus\{i(n)\colon n<\omega\}$.
Nevertheless, we have the following.

 \begin{pro}\label{pro:14}
 If   $X$ is a scattered \sP-space of weight $\omega_1$ such that $N(X)=\omega$, then $X=_hJ(\omega)$. 
 \end{pro}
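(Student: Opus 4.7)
The plan is to establish the two embeddings $J(\omega)\subset_h X$ and $X\subset_h J(\omega)$ separately. First I would apply Lemma~\ref{lem:4} to the trivial cover $\{X\}$ to obtain a partition of $X$ into elementary clopen subsets $\{E_\alpha\colon\alpha<\omega_1\}$ (the indexing can be chosen of cardinality at most $\omega_1$ by Proposition~\ref{pro:1}). Because each $E_\alpha$ is clopen in $X$, derivatives are computed fibrewise, so $N(E_\alpha)\leq N(X)=\omega$, and hence $N(E_\alpha)$ is a finite successor ordinal. Moreover, since $X^{(n)}\neq\emptyset$ for every $n<\omega$, the values $N(E_\alpha)$ cannot all be bounded by a single finite $k$: otherwise $X^{(k)}=\emptyset$ would contradict $N(X)=\omega$.

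For $J(\omega)\subset_h X$, I would extract from the partition a pairwise disjoint sequence $(E_{\alpha_n})_{n<\omega}$ with $N(E_{\alpha_n})\geq 2n+1$, which is possible by the unboundedness just noted. Each $E_{\alpha_n}$ is elementary with its top derivative a singleton, and $N(E_{\alpha_n})-1\geq 2n$, so the remark immediately following Proposition~\ref{pro:9} yields $J(n+1)\subset_h E_{\alpha_n}$. Since the $E_{\alpha_n}$ are pairwise disjoint clopen subsets of $X$, these individual embeddings assemble into a subspace embedding
\[\textstyle \bigoplus_{n<\omega}J(n+1)\subset_h X,\]
and because $J(0)=\emptyset$, the left-hand side is homeomorphic to $J(\omega)=\bigoplus_{n<\omega}J(n)$.

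For the reverse direction, Proposition~\ref{pro:8} provides, for each $\alpha$, an embedding $E_\alpha\subset_h J(N(E_\alpha))$. By Lemma~\ref{lem:13} applied with $\beta=\omega$, $J(\omega)\cong\bigoplus_{\omega_1}J(\omega)$, so $J(\omega)$ contains $\omega_1$ pairwise disjoint clopen copies of $J(\omega)$, and therefore $\omega_1$ pairwise disjoint clopen copies of $J(n)$ for every finite $n\geq 1$. I would assign each $E_\alpha$ a distinct clopen copy of $J(N(E_\alpha))$ inside $J(\omega)$ and embed $E_\alpha$ there using Proposition~\ref{pro:8}. Since the targets are pairwise disjoint clopen subsets of $J(\omega)$, the union of the images carries the topology of the sum $\bigoplus_\alpha E_\alpha=X$, giving $X\subset_h J(\omega)$.

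The main delicate point is the cardinality bookkeeping in the second embedding: one needs $\omega_1$ pairwise disjoint clopen copies of $J(n)$ available in $J(\omega)$ simultaneously for every finite $n$, and this is exactly what the self-similarity in Lemma~\ref{lem:13} delivers. Once that reservoir is in hand, both embeddings combine to give $X=_h J(\omega)$.
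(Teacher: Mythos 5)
Your proof is correct and follows essentially the same route as the paper's: partition $X$ into elementary sets via Lemma~\ref{lem:4}, use Proposition~\ref{pro:9} (via the remark following it) on pieces of unbounded rank to get $J(\omega)\subset_h X$, and use Proposition~\ref{pro:8} together with $J(\omega)\cong\bigoplus_{\omega_1}J(\omega)$ from Lemma~\ref{lem:13} for the reverse embedding. You are somewhat more explicit than the paper about the unboundedness of the ranks and the disjointness bookkeeping, but the substance is identical.
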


 \begin{proof}
According to Lemma \ref{lem:4}, let $X=\bigcup\{E_\gamma\colon \gamma<\omega_1\}$ be a partition such that each $E_\gamma$ is an elementary set.
For each $n$, fix $\gamma$ such that $N(E_\gamma)>2n$.
By Proposition \ref{pro:9}, we have $J(n)\subset_h E_\gamma$, consequently $J(\omega)\subset_h X$.
The inequality $N(E_\gamma)<\omega$ and Proposition \ref{pro:8} imply  $E_\gamma\subset_hJ(N(E_\gamma))\subseteq J(\omega)$.
Hence $X\subset_hJ(\omega)$, since  $J(\omega)\cong \bigoplus_{\omega_1}J(\omega)$ by Lemma \ref{lem:13}.
 \end{proof}

 Inductively one can check that $J(n+1)^{(1)}=J(n)$ for each $n\in\omega$, but $J(\omega)^{(1)}\cong J(\omega)$ and therefore, by induction, one  readily checks  $J(\alpha)^{(1)}\cong J(\alpha)$ for any infinite $\alpha$.

\begin{pro}\label{pro:15}
  If $Y$ is an elementary set of weight $\omega_1$ with Cantor--Bendixson rank $\alpha+1$, then $Y\subset_h J(\alpha+1)$.
\end{pro}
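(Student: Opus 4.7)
My proposal is a transfinite induction on $\alpha$, mirroring the inductive structure already used in Proposition \ref{pro:8}, Theorem \ref{thm:6}, and Proposition \ref{pro:14}. The base case $\alpha+1<\omega$ is exactly Proposition \ref{pro:8}, so suppose $\alpha\ge\omega$ and the claim holds for every smaller ordinal. I would fix the unique point $y\in Y^{(\alpha)}$ together with a \sP-base $\{W_\gamma:\gamma<\omega_1\}$ at $y$; on the target side fix the apex $x\in J(\alpha+1)^{(\alpha)}$ and the canonical \sP-base $\{V_\gamma:\gamma<\omega_1\}$ at $x$ provided by the definition of $J(\alpha+1)$, so that $V_\gamma\setminus V_{\gamma+1}\cong\bigoplus_{\omega_1}J(\alpha)$ for every $\gamma<\omega_1$.

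The plan for the inductive step is to embed each source slice into the corresponding target slice and glue through the two apices via Lemma \ref{lem:2}. Each source slice $W_\gamma\setminus W_{\gamma+1}$ is clopen in $Y$ and avoids $y$; since $y$ is the unique point of $Y$ of rank $\alpha$, this forces $N(W_\gamma\setminus W_{\gamma+1})\le\alpha$. By Lemma \ref{lem:4} I would partition the slice as $\bigcup\{E_{\gamma,\mu}:\mu<\omega_1\}$, where each $E_{\gamma,\mu}$ is elementary with $N(E_{\gamma,\mu})=\eta_{\gamma,\mu}+1\le\alpha$. By the induction hypothesis (or by Proposition \ref{pro:8} when $\eta_{\gamma,\mu}+1<\omega$), each $E_{\gamma,\mu}$ embeds into $J(\eta_{\gamma,\mu}+1)$.

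The auxiliary fact needed to continue is the monotonicity $J(\beta+1)\subset_h J(\alpha)$ whenever $\beta+1\le\alpha$. When $\alpha$ is a limit ordinal this is immediate, because $J(\alpha)=\bigoplus\{J(\gamma):\gamma<\alpha\}$ has $J(\beta+1)$ as a clopen summand; when $\alpha=\delta+1$ is a successor, a short secondary induction on $\alpha$ uses that any slice of $J(\alpha)$ is $\bigoplus_{\omega_1}J(\delta)$ and therefore contains a clopen copy of $J(\delta)$, reducing to a smaller successor ordinal. Combining, every $E_{\gamma,\mu}$ embeds into $J(\alpha)$, so the entire slice $W_\gamma\setminus W_{\gamma+1}$ embeds into $\bigoplus_{\omega_1}J(\alpha)\cong V_\gamma\setminus V_{\gamma+1}$.

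To finish I would apply Lemma \ref{lem:2} with $f$ the identity injection on $\omega_1$, using the slice-wise embeddings just assembled; this glues through the apex pair $(y,x)$ and yields $Y\subset_h J(\alpha+1)$. The only step that demands genuine attention is the auxiliary monotonicity $J(\beta+1)\subset_h J(\alpha)$; although routine, it is the only piece not obtained by a direct invocation of the machinery already developed in Sections \ref{sec:3} and \ref{sec:6}.
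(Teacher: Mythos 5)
Your proof is correct and follows essentially the same route as the paper's: transfinite induction with Proposition \ref{pro:8} as the base case, decomposing each slice of a \sP-base at $y$ into elementary sets via Lemma \ref{lem:4}, embedding them into the corresponding slice $\bigoplus_{\omega_1}J(\alpha)$ by the induction hypothesis, and gluing through the apices with Lemma \ref{lem:2}. The one place you go beyond the paper is in isolating and proving the monotonicity $J(\beta+1)\subset_h J(\alpha)$ for $\beta+1\le\alpha$, a step the paper's proof uses implicitly without comment; making it explicit is a genuine (if small) improvement in rigour, not a change of method.
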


\begin{proof}
According to Proposition \ref{pro:8}, the thesis is fulfilled for $\alpha<\omega$.
We simply mimic the proof of Proposition \ref{pro:8}, for other ordinal numbers.
Let $Y^{(\alpha)}=\{y\}$ and suppose  that the hypothesis is fulfilled for each $\beta<\alpha$.
  Let $\{W_\gamma\colon\gamma<\omega_1\}$ be a \sP-base at the point $y\in Y$ and let  $\{V_\gamma\colon\gamma<\omega_1\}$ be a \sP-base at the point $x\in J(\alpha+1)^{(\alpha)}$ such that  $V_\gamma\setminus V_{\gamma+1}=\bigoplus_{\omega_1}J(\alpha)$ for each $\gamma<\omega_1$.
  By Lemma \ref{lem:4},
  $$W_\gamma\setminus W_{\gamma+1}\cong \bigoplus\{E_\mu\colon\mu<\omega_1\},$$
  where sets $E_\mu$ are  elementary  with $N(E_\mu)\leq\alpha$,   for each $\mu<\omega_1$.
 Therefore, by the induction hypothesis, there exist embeddings
  $$f|_{W_\gamma\setminus W_{\gamma+1}}\colon W_\gamma\setminus W_{\gamma+1}\to V_\gamma\setminus V_{\gamma+1}.$$
To finish the proof, put $f(y)=x$.
\end{proof}

 \begin{cor}\label{cor:16}
 If $X$ and $Y$ are elementary sets with Cantor--Bendixson rank $\omega+1$, both  of the weight $\omega_1$, then $X=_hY$.
 \end{cor}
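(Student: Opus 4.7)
The plan is to establish $X=_h J(\omega+1)$ for any elementary set $X$ of weight $\omega_1$ with $N(X)=\omega+1$; by symmetry the same holds for $Y$, and then transitivity of $=_h$ gives $X=_hY$. One direction, $X\subset_h J(\omega+1)$, is already delivered by Proposition \ref{pro:15}, so the task reduces to exhibiting an embedding $J(\omega+1)\subset_h X$.

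To build that embedding, let $x$ denote the unique point with $X^{(\omega)}=\{x\}$, and fix the canonical \sP-base $\{V_\alpha:\alpha<\omega_1\}$ at the top point of $J(\omega+1)$, whose slices satisfy $V_\alpha\setminus V_{\alpha+1}\cong\bigoplus_{\omega_1}J(\omega)\cong J(\omega)$ by Lemma \ref{lem:13}. I would construct a \sP-base $\{U_\alpha:\alpha<\omega_1\}$ at $x$ in $X$ whose every slice $U_\alpha\setminus U_{\alpha+1}$ has Cantor--Bendixson rank exactly $\omega$. At a successor step, given $U_\alpha$, I would use that $x\in X^{(\omega)}$ to select points $p_n\in (U_\alpha)^{(n)}\setminus\{x\}$ for each $n<\omega$, separate $x$ from every $p_n$ by a clopen set (a regular \sP-space has a clopen base, see Section \ref{sec:2}), and take $U_{\alpha+1}$ to be the intersection of these clopen sets, further refined to keep a neighborhood base at $x$. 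The intersection is clopen since $G_\delta$-sets are open in a \sP-space while arbitrary intersections of closed sets are closed. The resulting slice meets $X^{(n)}$ at $p_n$ for every $n$, and because the slice is clopen in $X$ its derivatives commute with those of $X$; this forces rank at least $\omega$, while exclusion of $x\in X^{(\omega)}$ keeps the rank at most $\omega$. At limit stages I would set $U_\alpha=\bigcap_{\beta<\alpha}U_\beta$.

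With every slice of this refined base of rank exactly $\omega$, Proposition \ref{pro:14} gives $U_\alpha\setminus U_{\alpha+1}=_h J(\omega)$, so each slice $V_\alpha\setminus V_{\alpha+1}\cong J(\omega)$ embeds into the corresponding $U_\alpha\setminus U_{\alpha+1}$. Applying Lemma \ref{lem:2} with the identity injection $f$ and these embeddings yields $J(\omega+1)\subset_h X$. Combined with Proposition \ref{pro:15} this gives $X=_h J(\omega+1)$, and the symmetric statement for $Y$ closes the argument. The main technical obstacle is the successor step of the base construction: one must enforce the rank-$\omega$ condition on every slice while simultaneously maintaining a neighborhood base, which hinges essentially on the \sP-space closure properties of clopen sets under countable operations together with the observation that $x\in X^{(\omega)}$ produces a witness $p_n\in X^{(n)}\setminus\{x\}$ inside every clopen neighborhood of $x$.
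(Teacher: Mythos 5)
Your proposal is correct, and it reaches the same reduction as the paper (both directions amount to $X=_hJ(\omega+1)$, with $X\subset_hJ(\omega+1)$ coming from Proposition \ref{pro:15}), but the hard direction $J(\omega+1)\subset_hX$ is executed differently. The paper keeps an arbitrary \sP-base at the top point of $Y$, decomposes each slice into elementary pieces of finite rank, uses the cofinal occurrence of pieces with $W^{(2n)}\neq\emptyset$ together with Proposition \ref{pro:9} to find $J(n)\subset_hV_{\alpha_n}\setminus V_{\alpha_{n+1}}$, and then regroups the base into $\omega_1$ consecutive blocks each containing a copy of $J(\omega)\cong\bigoplus_{\omega_1}J(\omega)$. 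You instead refine the base downward: using that $x\in X^{(\omega)}$ puts a point of $X^{(n)}\setminus\{x\}$ in every clopen neighbourhood of $x$, you shrink $U_\alpha$ by a countable intersection of clopen sets (legitimate in a \sP-space) so that every slice has Cantor--Bendixson rank exactly $\omega$, and then invoke Proposition \ref{pro:14} to get $J(\omega)\subset_h U_\alpha\setminus U_{\alpha+1}$ before gluing with Lemma \ref{lem:2}. The two mechanisms are equivalent in strength --- your route outsources the hunt for copies of $J(n)$ to Proposition \ref{pro:14} (which itself rests on Proposition \ref{pro:9}), at the price of the extra base-refinement construction, while the paper performs that hunt inline on the original base. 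Your rank computation for the slices is sound because a clopen slice $U$ satisfies $U^{(n)}=U\cap X^{(n)}$ and excludes $x$, so its rank is exactly $\omega$; the only point worth making explicit is that such a slice is non-discrete and hence of weight $\omega_1$, as Proposition \ref{pro:14} formally requires.
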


 \begin{proof}
 Assume that  $X=J(\omega+1)$.
 We have $J(\omega+1)^{(\omega)}=\{x\}$ and  $Y^{(\omega)}=\{y\}$.  
Let $\{V_\alpha\colon \alpha<\omega_1\}$ be a \sP-base at the point $y\in Y$ such that each slice $V_\alpha\setminus V_{\alpha+1}$ is the sum of a family $\calR_\alpha$ consisting  of elementary subsets and let $\{U_\alpha\colon\alpha<\omega_1\}$ be a \sP-base at the point $x\in J(\omega+1)$ such that each slice $U_\alpha\setminus U_{\alpha+1}$ is homeomorphic to $J(\omega)\cong \bigoplus_{\omega_1}J(\omega)$.
For each  $E\in\calR_\alpha$, we have $N(E)<\omega$, hence the sum of $\calR_\alpha$ can be embedded into $U_\alpha\setminus U_{\alpha+1}$.
Sending the point $y$ to $x$, we get $Y\subset_h J(\omega+1)$. 

To prove that $J(\omega+1)\subset_hY$, let families $\calR_\alpha$ be as above, assuming, without loss of generality,  that for  every $\alpha<\omega_1$ and $n<\omega$ there exists an elementary set $W\in\calR_\alpha$ such that $W^{(2n)}\neq\emptyset$.
By Proposition \ref{pro:9}, we get  an increasing sequence $(\alpha_n)$ such that $J(n)\subset_h V_{\alpha_n}\setminus V_{\alpha_{n+1}}$.
Therefore  $J(\omega)\subset_h V_0\setminus V_\beta$ for some $\beta<\omega_1$.
Repeating this procedure $\omega_1$ many times, we obtain an unbounded subset  $\{\beta_\gamma\colon\gamma<\omega_1\}\subseteq \omega_1$ such that $J(\omega)\subset_h V_{\beta_\gamma}\setminus V_{\beta_{\gamma+1}}$.
Let $W_\gamma\subseteq V_{\beta_\gamma}\setminus V_{\beta_{\gamma+1}}$ be a copy of $J(\omega)\cong\bigoplus_{\omega_1}J(\omega)$.
Hence $J(\omega+1)\cong \bigoplus\{W_\gamma\colon\gamma<\omega_1\}\cup\{g\}\subseteq Y$. 
\end{proof}

\begin{pro}\label{pro:17}
If $\beta\in Lim$, then  any \sP-space $X$ of weight $\omega_1$ such that $N(X)\leq\beta$ is homeomorphic to a subset of $J(\beta)$, i.e. $X\subset_h J(\beta)$.
\end{pro}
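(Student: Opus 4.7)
The plan is to combine the partition lemma (Lemma~\ref{lem:4}) with Proposition~\ref{pro:15} and the self-similarity $J(\beta)\cong\bigoplus_{\omega_1}J(\beta)$ from Lemma~\ref{lem:13}. First I would apply Lemma~\ref{lem:4} to decompose $X$ as a disjoint union $X=\bigcup\{E_\gamma\colon\gamma<\omega_1\}$ of elementary clopen sets. Since every $E_\gamma$ is an elementary subspace of $X$, its rank satisfies $N(E_\gamma)\leq N(X)\leq\beta$; as noted in Section~\ref{sec:3}, the rank of an elementary set is always a successor, so $N(E_\gamma)=\alpha_\gamma+1$ for some $\alpha_\gamma$, and the fact that $\beta\in Lim$ forces $\alpha_\gamma+1<\beta$.

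Next, by Proposition~\ref{pro:15}, each $E_\gamma$ embeds into $J(\alpha_\gamma+1)$. Because $\alpha_\gamma+1<\beta$, the space $J(\alpha_\gamma+1)$ appears as a clopen summand in the definitional decomposition $J(\beta)=\bigoplus\{J(\gamma)\colon\gamma<\beta\}$, so $E_\gamma\subset_h J(\beta)$. The point is now to place all of the $E_\gamma$ disjointly inside one copy of $J(\beta)$. For this I would invoke Lemma~\ref{lem:13} to obtain $\omega_1$ many pairwise disjoint clopen copies $\{J_\gamma\colon\gamma<\omega_1\}$ of $J(\beta)$ inside $J(\beta)$, and choose an embedding $f_\gamma\colon E_\gamma\to J_\gamma$ from the preceding step.

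Finally I would define $f\colon X\to J(\beta)$ piecewise by $f|_{E_\gamma}=f_\gamma$. Injectivity is immediate from the disjointness of the images $J_\gamma$, and since each $E_\gamma$ is clopen in $X$ while each $J_\gamma$ is clopen in $J(\beta)$, the continuity of $f$ and of $f^{-1}$ on $f[X]$ reduces to continuity on each clopen piece, which holds by construction. No step looks like a genuine obstacle: Proposition~\ref{pro:15} delivers the qualitative content and Lemma~\ref{lem:13} supplies enough room to fit $\omega_1$ elementary pieces side by side; the only care needed is to observe that the ranks $N(E_\gamma)$ must land strictly below $\beta$, which is automatic because elementary ranks are successors and $\beta$ is a limit.
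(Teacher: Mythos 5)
Your argument is correct and follows essentially the same route as the paper: partition $X$ into elementary sets via Lemma~\ref{lem:4}, embed each piece into some $J(\gamma_\mu)$ with $\gamma_\mu<\beta$, and use the self-similarity of $J(\beta)$ (Lemma~\ref{lem:13} and the decomposition $J(\beta)=\bigoplus\{J(\gamma)\colon\gamma<\beta\}$) to place all the pieces disjointly. The only cosmetic difference is that you cite Proposition~\ref{pro:15} directly for each elementary piece, whereas the paper phrases this step as an induction over limit ordinals with Proposition~\ref{pro:14} as the base case; your observation that elementary ranks are successors and hence land strictly below the limit $\beta$ is exactly the point that makes this work.
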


\begin{proof}
By Proposition \ref{pro:14}, the hypothesis is fulfilled for  $\beta=\omega$.
Assume that the hypothesis is fulfilled for each limit ordinal number  $\gamma<\beta$.
Let  $X=\bigoplus\{E_\mu\colon \mu<\lambda\}$, where $\lambda\leq\omega_1$ and each $E_\mu$ is an elementary set with $N(E_\mu)<\beta$.
By the induction hypothesis, fix embeddings $f_\mu\colon E_\mu\to J(\gamma_\mu)$, where   $\gamma_\mu<\beta$.
Since $J(\beta)$ has a representation as the sum $\bigoplus\{\bigoplus_{\omega_1}J(\gamma)\colon \gamma<\beta\mbox{ and }\gamma\notin Lim\}$, one readily checks that
$$\textstyle \bigcup\{f_\mu\colon\mu<\lambda\}\colon X\to \bigoplus\{J(\gamma_\mu)\colon\mu<\lambda\}\cong J(\beta)$$
 is a desired embedding.
\end{proof}

\begin{cor}\label{cor:20a}
  If $X$ is a crowded \sP-space of  weight $\omega_1$ and $Y$ is a scattered \sP-space of weight $\omega_1$, then $Y\subset_h X$.
\end{cor}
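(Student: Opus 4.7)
My plan is to reduce the corollary to showing that $J(\beta)\subset_h X$ for every $\beta<\omega_2$. Since $Y$ is a scattered \sP-space of weight $\omega_1$, Proposition~\ref{pro:1} gives $|Y|\leq\omega_1$ and hence $N(Y)<\omega_2$; choosing any $\beta\in Lim$ with $\beta\geq N(Y)$, Proposition~\ref{pro:17} yields $Y\subset_h J(\beta)$. So it suffices to embed $J(\beta)$ into the crowded \sP-space $X$ for every $\beta<\omega_2$.

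The key combinatorial input I will rely on is that every crowded \sP-space $Z$ of weight $\omega_1$ admits $\omega_1$ pairwise disjoint nonempty clopen subspaces, each of which is itself a crowded \sP-space of weight $\omega_1$. To see this, fix $p\in Z$ and a \sP-base $\{U_\gamma\colon\gamma<\omega_1\}$ at $p$, with slices $S_\gamma=U_\gamma\setminus U_{\gamma+1}$. If only countably many $S_\gamma$ were nonempty, then $Z\setminus\{p\}=\bigcup S_\gamma$ would be a countable union of clopen sets, hence clopen by the \sP-space property, forcing $p$ to be isolated and contradicting crowdedness; so $\omega_1$ slices are nonempty. Each nonempty $S_\gamma$ is clopen in $Z$, hence a \sP-space of weight $\leq\omega_1$ with no isolated point (an isolated point of $S_\gamma$ would be isolated in $Z$). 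A nonempty \sP-space of countable weight is discrete, since at any point the countable local base intersects to the singleton, which is then open; hence the weight of $S_\gamma$ is exactly $\omega_1$.

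With this in hand I would prove $J(\beta)\subset_h X$ by transfinite induction on $\beta<\omega_2$. The cases $\beta\leq 1$ are immediate. For $\beta\in Lim$, using $J(\beta)=\bigoplus\{J(\gamma)\colon\gamma<\beta\}$ and $|\beta|\leq\omega_1$, I inject $\{\gamma<\beta\}$ into the indices of nonempty slices of $X$ and invoke the induction hypothesis on each such slice $S_{\alpha(\gamma)}$ to obtain $J(\gamma)\subset_h S_{\alpha(\gamma)}$; the disjoint clopen union inside $X$ then realizes $J(\beta)$. For $\beta=\alpha+1$, I fix a \sP-base $\{V_\gamma\}$ at the unique point $x\in J(\alpha+1)^{(\alpha)}$, for which $V_\gamma\setminus V_{\gamma+1}\cong\bigoplus_{\omega_1}J(\alpha)$. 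Applying the key input inside each nonempty slice $S_\gamma$ of $X$ produces $\omega_1$ disjoint clopen crowded \sP-subspaces of $S_\gamma$, each containing a copy of $J(\alpha)$ by induction, so $\bigoplus_{\omega_1}J(\alpha)\subset_h S_\gamma$. Lemma~\ref{lem:2}, applied with an injection $f\colon\omega_1\to\omega_1$ enumerating the nonempty slices of $X$ together with the just-constructed embeddings $F_\gamma\colon V_\gamma\setminus V_{\gamma+1}\to S_{f(\gamma)}$, then yields $J(\alpha+1)\subset_h X$.

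The main obstacle is the key combinatorial input, namely that clopen slices of a \sP-base of a crowded \sP-space of weight $\omega_1$ inherit being crowded and of weight exactly $\omega_1$; this crucially uses the \sP-space property that countable unions of clopen sets are clopen. Once that is established, the successor step is mechanical via Lemma~\ref{lem:2} and the limit step merely packages disjoint embeddings.
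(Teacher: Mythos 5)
Your proof is correct and follows essentially the same route as the paper: reduce via Proposition~\ref{pro:17} to embedding each $J(\beta)$, then induct on $\beta$, handling limits by distributing the summands $J(\gamma)$ over disjoint clopen crowded pieces and successors via Lemma~\ref{lem:2}. The only difference is that you prove from scratch (correctly, and with more care about empty slices) the fact that a crowded \sP-space of weight $\omega_1$ splits into $\omega_1$ pairwise disjoint clopen crowded subspaces, which the paper simply cites from \cite{bkp}.
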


\begin{proof}
  According to Proposition \ref{pro:17}, it suffices to prove inductively that $J(\alpha)\subset_hX$ for each $\alpha<\omega_2$.
  Recall that, by \cite[Propositions 1 and 2]{bkp}, any open subset of a crowded \sP-space contains $\omega_1$-many clopen pairwise disjoint clopen subsets, i.e. $X=\bigoplus\{X_\alpha\colon\alpha<\omega_1\}$, where each $X_\alpha$ is crowded.
  
  We have $J(1)\subset_hX$, since $J(1)$ is a one-point space.
  Assume that $J(\gamma)\subset_hZ$ for each $\gamma<\alpha$ and any crowded \sP-space $Z$ of weight $\omega_1$.
  If $\alpha\in Lim$, then $J(\alpha)=\bigoplus_{\gamma<\alpha}J(\gamma)$.
By the induction hypothesis, we have $J(\gamma)\subset_hX_\gamma$, therefore $J(\alpha)\subset_hX$.
  If $\alpha=\beta+1$, then fix $x\in X$ and a \sP-base $\{V_\gamma\colon\gamma<\omega_1\}$ at $x\in X$.
  Let $J(\beta+1)^{(\beta)}=\{g\}$ and $\{W_\gamma\colon\gamma<\omega_1\}$ be a \sP-base at $g\in J(\beta+1)$ such that $W_\gamma\setminus W_{\gamma+1}\cong\bigoplus_{\omega_1}J(\beta)$.
  By the induction hypothesis, there exist embeddings $F_\gamma\colon W_\gamma\setminus W_{\gamma+1}\to  V_\gamma\setminus V_{\gamma+1}$.
By  Lemma \ref{lem:2}, we get   $J(\beta+1)\subset_hX$,   putting $F(g)=x$.
\end{proof}

By Proposition \ref{pro:14} and Corollary \ref{cor:16}, \sP-spaces with the Cantor--Bendixson rank $\omega$ or $\omega+1$ have dimensional type of $J(\omega)$ or $J(\omega+1)$, respectively, and similarly for countable limit ordinals.
 
\begin{thm}\label{thm:18}
If $\beta\in Lim\cap\omega_1$, then $J(\beta)=_hX$ for any \sP-space $X$ of weight $\omega_1$ with $N(X)=\beta$.
Moreover, if $n>0$ and  $Z$ is a \sP-space of weight $\omega_1$ with $N(Z)=\beta+2n-1$, then  $J(\beta+n)\subset_hZ$.
\end{thm}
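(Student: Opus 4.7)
The plan is to establish both assertions by simultaneous transfinite induction on $\beta\in Lim\cap\omega_1$: at each stage I first prove Part 1, then derive Part 2 for $\beta$ by an internal induction on $n$ whose base case ($n=1$) uses Part 1 at $\beta$ just proved. Proposition \ref{pro:14} seeds the outer induction at $\beta=\omega$ by giving Part 1 there; Corollary \ref{cor:16} gives an alternative check of Part 2 with $n=1$ at $\beta=\omega$, although the general argument covers it anyway.

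For Part 1, the inclusion $X\subset_hJ(\beta)$ is immediate from Proposition \ref{pro:17}. For the reverse $J(\beta)\subset_hX$, I would use Lemma \ref{lem:4} to partition $X=\bigoplus_{\mu<\omega_1}E_\mu$ into elementary sets. Since $\beta$ is a limit and elementary sets have successor rank, each $N(E_\mu)<\beta$ and $\sup_\mu N(E_\mu)=\beta$; moreover, for every $\eta<\beta$ the set of $\mu$ with $N(E_\mu)>\eta$ is cofinal in $\omega_1$. Enumerate $\beta=\{\gamma_n\colon n<\omega\}$ and decompose each $\gamma_n=\beta_n'+m_n$ with $\beta_n'\in(Lim\cap\beta)\cup\{0\}$. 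For each $n$ choose a distinct $E_{\mu_n}$ of adequate rank: if $\gamma_n$ is a limit, invoke the outer induction hypothesis Part 1 at $\gamma_n$ inside a suitable subspace of $E_{\mu_n}$ of rank $\gamma_n$ (extracted via Proposition \ref{pro:5}); if $\gamma_n$ is a successor, invoke the outer induction hypothesis Part 2 at $\beta_n'$ (or Proposition \ref{pro:9} when $\beta_n'=0$) inside an elementary subset of $E_{\mu_n}$ of rank $\beta_n'+2m_n+1<\beta$. Summing the pairwise disjoint images gives $\bigoplus_nJ(\gamma_n)\cong J(\beta)\subset_hX$.

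For Part 2 at $\beta$, fix $n\geq 1$ and pick $p\in Z^{(\beta+2n-2)}$ together with a clopen elementary neighbourhood $E$ of $p$ with $E^{(\beta+2n-2)}=\{p\}$, so $N(E)=\beta+2n-1$. Fix a \sP-base $\{W_\gamma\colon\gamma<\omega_1\}$ at $p$ in $E$. The technical heart is a \textbf{slice-regrouping}: since $p\in E^{(\delta)}$ for every $\delta<\beta+2n-2$, for each such $\delta$ the ranks of the original slices $W_\gamma\setminus W_{\gamma+1}$ exceed $\delta$ on a cofinal set of indices, and when $n\geq 2$, so that $\beta+2n-2$ is a successor, unboundedly many slices in fact have rank \emph{exactly} $\beta+2n-2$ (namely those meeting $E^{(\beta+2n-3)}\setminus\{p\}$). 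Recursively build an increasing continuous sequence $(\gamma_\alpha)_{\alpha<\omega_1}$ so that each combined slice $W_{\gamma_\alpha}\setminus W_{\gamma_{\alpha+1}}$ has rank $\beta$ when $n=1$, respectively $\beta+2n-2$ when $n\geq 2$; the reindexed family $\{W_{\gamma_\alpha}\}$ is still a \sP-base at $p$ by the regularity of $\omega_1$ and inheritance of the intersection rule from the original base.

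Finally, for $n=1$, Part 1 at $\beta$ identifies each combined slice with $J(\beta)\cong\bigoplus_{\omega_1}J(\beta)$, giving $\omega_1$ pairwise disjoint copies of $J(\beta)$ per slice, after which Lemma \ref{lem:2} produces the embedding $J(\beta+1)\subset_hE$. For $n\geq 2$, inside each combined slice of rank $\beta+2n-2$ I would select an elementary subset of the same rank, apply Proposition \ref{pro:5} to split it into $\omega_1$ pairwise disjoint elementary subsets of rank $\beta+2(n-1)-1$, embed $J(\beta+n-1)$ in each using the internal induction hypothesis on $n$, and finish with a second application of Lemma \ref{lem:2}. The hard part is the slice-regrouping itself: one must verify that the reindexed sequence is a genuine \sP-base (the limit condition $W_{\gamma_\alpha}=\bigcap_{\alpha'<\alpha}W_{\gamma_{\alpha'}}$ reduces to $\gamma_\alpha=\sup_{\alpha'<\alpha}\gamma_{\alpha'}$ together with the corresponding property of the original base) and that the prescribed rank of each combined slice is attainable, which hinges on a subtle interplay between the limit character of $\beta$ and the parity of $2n-2$.
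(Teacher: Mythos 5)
Your proposal is correct and follows essentially the same route as the paper: $X\subset_hJ(\beta)$ from Proposition \ref{pro:17}, the reverse inclusion by decomposing $X$ into elementary sets and realizing each $J(\gamma)$, $\gamma<\beta$, via the induction hypothesis at smaller limit ordinals, and the ``moreover'' part by an induction on $n$ mimicking Proposition \ref{pro:9}. The only differences are bookkeeping: you make explicit (via slice regrouping and Proposition \ref{pro:5}) the step the paper compresses into ``$g$ has a \sP-base such that each slice contains $\omega_1$ many pairwise disjoint elementary sets of the right rank,'' and you carry the induction hypothesis directly rather than through the paper's invariant $(*)_\beta$.
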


\begin{proof}
If $\beta=\omega$ and $n=1$, then there is nothing to do, as it is observed just before this theorem.
Assume that
\begin{itemize}
\item[$(*)_\beta$] If $\omega\leq\gamma<\beta$, then there exists $\lambda<\beta$ such that $\gamma<\lambda$ and if $E$ is an elementary set with $N(E)=\lambda$, then $J(\gamma)\subset_hE.$
\end{itemize}
Conditions $(*)_\beta$ suffice to show that  if $N(X)=\beta$, then $J(\beta)\subset_hX$.
Indeed, $X$ is the sum of a family of elementary sets $E_\mu$ such that $\beta$ is the supremum of ordinal numbers $N(E_\mu)$, hence if $\gamma<\beta$, then one can choose an elementary set $E_\mu\supset_h J(\gamma)$, for each $\gamma$ a different one.
Having chosen sets $E_\mu$, we get $\bigoplus\{J(\gamma)\colon\gamma<\beta\}\subset_h X$.
Therefore, by virtue of Proposition \ref{pro:17}, we get $J(\beta)=_hX$.

Now, we prove that  if $n>0$ and $N(Z)=\beta+2n-1$, then $J(\beta+n)\subset_h Z$.
If $N(X)=\beta$, then $J(\beta)=_h X$  as it has been proved above.
Using Lemma \ref{lem:13}, one can readily check that $J(\beta+1)\subset_hX$ whenever $N(X)=\beta+1$.
The inductive steps mimic the proof of Proposition \ref{pro:9}.
Namely, assume  that the hypothesis is fulfilled for any \sP-space $Y$ such that $Y^{(\beta+2n-2)}=\{y\}$, i.e. $J(\beta+n)\subset_hY$.
Now fix a \sP-space $Z$ with $Z^{(\beta+2n)}=\{g\}$.
  Let
  $$Z^*=\{g\}\cup(Z\setminus Z^{(\beta+2n-1)}).$$
  The point $g\in Z^*$ has a \sP-base $\{V_\alpha\colon \alpha<\omega_1\}$ such that each slice $V_\alpha\setminus V_{\alpha+1}$ contains $\omega_1$ many pairwise disjoint elementary sets, each with a one-point $(\beta+2n-2)$-derivative.
  By the induction hypothesis, any such elementary set contains a copy of  $J(\beta+n)$, hence $J(\beta+n+1)\subset_hZ^*\subseteq Z$.
  
  Observe that if $(*)_\beta$ is fulfilled, then $(*)_{\beta+\omega}$ is also fulfilled, which finishes the proof. 
\end{proof}

\begin{pro}\label{pro:19}
If $\beta\in Lim\cap \omega_1$ and $X$ is an elementary set with $N(X)=\beta+1$, then $J(\beta+1)=_hX$.
\end{pro}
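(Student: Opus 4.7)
The plan is to prove the two embeddings $X\subset_h J(\beta+1)$ and $J(\beta+1)\subset_h X$ separately. The first is immediate from Proposition~\ref{pro:15}, which applies since $X$ is an elementary set of Cantor--Bendixson rank $\beta+1$. The substantive work is the reverse direction, and the strategy will mimic the argument of Corollary~\ref{cor:16} (the case $\beta=\omega$), using Theorem~\ref{thm:18} as the key input: a \sP-space of weight $\omega_1$ whose Cantor--Bendixson rank equals the countable limit ordinal $\beta$ has dimensional type $J(\beta)$.

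Write $X^{(\beta)}=\{y\}$ and fix a \sP-base $\{V_\alpha\colon\alpha<\omega_1\}$ at $y$. The first step is to show that for every $\gamma<\beta$ the set $\{\xi<\omega_1\colon N(V_\xi\setminus V_{\xi+1})>\gamma\}$ is cofinal in $\omega_1$. Since $\beta$ is a limit ordinal, $\gamma+1<\beta$, hence $y\in X^{(\gamma+1)}=(X^{(\gamma)})'$; thus in every $V_\alpha$ there is a point $z\in X^{(\gamma)}\setminus\{y\}$, which must sit in some slice $V_\xi\setminus V_{\xi+1}$ with $\xi\geq\alpha$, and the slice being clopen in $X$ then forces $(V_\xi\setminus V_{\xi+1})^{(\gamma)}\ni z$, so $N(V_\xi\setminus V_{\xi+1})>\gamma$.

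Next, since $\beta<\omega_1$ is a countable limit ordinal, $\mathrm{cf}(\beta)=\omega$; I fix a sequence $(\gamma_n)$ with $\gamma_n<\beta$ and $\sup_n\gamma_n=\beta$, and construct a strictly increasing sequence $(\alpha_\lambda)_{\lambda<\omega_1}$ by transfinite recursion. Set $\alpha_0=0$; at limits $\lambda$ put $\alpha_\lambda=\sup_{\mu<\lambda}\alpha_\mu$; given $\alpha_\lambda$, use the cofinality statement just proved to choose $\alpha_\lambda\leq\xi_0<\xi_1<\ldots$ with $N(V_{\xi_n}\setminus V_{\xi_n+1})>\gamma_n$, and put $\alpha_{\lambda+1}=\sup_n(\xi_n+1)$. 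Setting $W_\lambda=V_{\alpha_\lambda}$, the family $\{W_\lambda\colon\lambda<\omega_1\}$ is a \sP-base at $y$: $(\alpha_\lambda)$ is cofinal in $\omega_1$, and at any limit $\lambda$ the ordinal $\alpha_\lambda$ is itself a limit, so $V_{\alpha_\lambda}=\bigcap_{\mu<\lambda}V_{\alpha_\mu}$. Each block $G_\lambda=W_\lambda\setminus W_{\lambda+1}$ contains slices of rank greater than every $\gamma_n$, while $y\notin G_\lambda$ forces $N(G_\lambda)\leq\beta$; hence $N(G_\lambda)=\beta$, and Theorem~\ref{thm:18} gives $G_\lambda=_h J(\beta)$.

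To finish, I fix a \sP-base $\{U_\alpha\colon\alpha<\omega_1\}$ at $x\in J(\beta+1)^{(\beta)}$ whose slices satisfy $U_\alpha\setminus U_{\alpha+1}\cong\bigoplus_{\omega_1}J(\beta)\cong J(\beta)$ by Lemma~\ref{lem:13}. Composing the homeomorphism with an embedding $J(\beta)\subset_h G_\lambda$ yields embeddings $F_\lambda\colon U_\lambda\setminus U_{\lambda+1}\to W_\lambda\setminus W_{\lambda+1}$, and Lemma~\ref{lem:2} applied with the identity injection on $\omega_1$ assembles them into the desired embedding $J(\beta+1)\subset_h X$ sending $x$ to $y$. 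The main obstacle is that individual slices of a \sP-base at $y$ need not have rank $\beta$---the rank $\beta+1$ of $X$ only guarantees slices of unboundedly large ranks strictly below $\beta$ in every tail---so the grouping of countably many consecutive slices into each block $G_\lambda$ is essential to attain rank exactly $\beta$, after which Theorem~\ref{thm:18} does the decisive work.
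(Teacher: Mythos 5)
Your proposal is correct and follows essentially the same route as the paper: $X\subset_h J(\beta+1)$ via Proposition~\ref{pro:15}, and for the converse a re-indexing of the \sP-base at the top point so that each block of consecutive slices has Cantor--Bendixson rank exactly $\beta$, after which Theorem~\ref{thm:18} identifies each block with $J(\beta)\cong\bigoplus_{\omega_1}J(\beta)$ and Lemma~\ref{lem:2} assembles the embedding. The only difference is that you explicitly justify (via the cofinality argument) the existence of the uncountable family of indices yielding blocks of rank $\beta$, a step the paper merely asserts.
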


\begin{proof}
Let $X^{(\beta)}=\{g\}$ and  $\{V_\alpha\colon\alpha<\omega_1\}$ be a \sP-base at $g\in X$.
By Proposition \ref{pro:15}, we have $X\subset_h J(\beta+1)$.
Fix  an uncountable subset $\{\gamma_\alpha\colon \alpha<\omega_1\}\subseteq\omega_1$, which is  enumerated increasingly and such that $N(V_{\gamma_\alpha}\setminus V_{\gamma_{\alpha+1}})=\beta$ for each $\alpha$.
Putting  $W_\alpha=V_{\gamma_\alpha}$ and bearing in mind that $\bigoplus_{\omega_1}J(\beta)\cong J(\beta)$, we have    $$\textstyle\bigoplus_{\omega_1}J(\beta)\subset_h W_\alpha\setminus W_{\alpha+1},$$
by Theorem \ref{thm:18}.
Applying Lemma \ref{lem:2}, we get $J(\beta+1)\subset_hX$.
\end{proof}

The relation $=_h$ is an equivalence, where
$$[X]_h=\{Y\colon Y=_hX\}$$ is the equivalence class of  $X$.
If $\lambda\in Lim\cup\{0\}$, then let  $\calP_\lambda$ be the family of equivalence classes of  \sP-spaces $X$ such that $\lambda<N(X)\leq\lambda+\omega$.
Putting  $[X]_h<_h[Y]_h$, whenever $X\subset_hY$, we obtain the ordered set, which we denote  $(\calP_\lambda,<_h)$.
By Theorem \ref{thm:18}, classes $[J(\lambda+1)]_h$  and $[J(\lambda+\omega)]_h$ are the least element and the greatest element of $\calP_\lambda$, respectively.
If $[X]_h\in\calP_\lambda$, then $X\setminus X^{(\lambda+1)}$ is the sum of a family of elementary sets with Cantor--Bendixson rank $\lambda+1$.
If $[X]_h\in\calP_\lambda$, then $[X^{(\lambda)}]_h\in\calP_0$ and by  Proposition \ref{pro:19} we have
$$\textstyle[X\setminus X^{(\lambda+1)}]_h=[\bigoplus_{\kappa}J(\lambda+1)]_h,$$
where $\kappa=|X^{(\lambda)}\setminus X^{(\lambda+1)}|$, i.e. $\kappa\in\omega\cup\{\omega,\omega_1\}$, but if $N(X)>\lambda+1$, then $\kappa=\omega_1$.
Note that, the class $[J(\omega^2)]_h$ does not  belong to any family $\calP_\lambda$, despite the fact that if $X$ is a \sP-space with $N(X)<\omega^2$, then  $[X]_h<_h[J(\omega^2)]_h$.
A similar statement holds when $\omega^2$ is replaced by $\gamma\in Lim$ such that  $\gamma\neq \lambda+\omega$ for each $\lambda\in Lim$.

The lemma below is probably well known.

\begin{lem}\label{lem:23}
If $f\colon  X\to Y$ is a continuous injection, then  we have  $f[X^{(\alpha)}]\subseteq Y^{(\alpha)}$,  for any ordinal number $\alpha$.
Moreover, if $X\subset_h Y$, then $X^{(\alpha)}\subset_h Y^{(\alpha)}$.
\end{lem}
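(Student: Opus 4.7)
My plan is to prove the first assertion by transfinite induction on $\alpha$ and then deduce the second assertion by applying the first to the embedding witnessing $X\subset_h Y$.

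For the base case $\alpha=0$ there is nothing to prove since $X^{(0)}=X$ and $f[X]\subseteq Y=Y^{(0)}$. For the successor step, assume $f[X^{(\alpha)}]\subseteq Y^{(\alpha)}$ and fix $x\in X^{(\alpha+1)}$, i.e.\ a point that is an accumulation point of $X^{(\alpha)}$ in $X$. Given any open neighbourhood $U\subseteq Y$ of $f(x)$, the preimage $f^{-1}[U]$ is an open neighbourhood of $x$ by continuity, so it meets $X^{(\alpha)}\setminus\{x\}$ at some $y$; then $f(y)\in U$ belongs to $f[X^{(\alpha)}]\subseteq Y^{(\alpha)}$ by the inductive hypothesis and differs from $f(x)$ by injectivity. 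Hence $f(x)$ is an accumulation point of $Y^{(\alpha)}$, i.e.\ $f(x)\in Y^{(\alpha+1)}$. For a limit ordinal $\alpha$, using $X^{(\alpha)}=\bigcap_{\beta<\alpha}X^{(\beta)}$ and the corresponding identity in $Y$, one gets
\[
f[X^{(\alpha)}]=f\Bigl[\bigcap_{\beta<\alpha}X^{(\beta)}\Bigr]\subseteq\bigcap_{\beta<\alpha}f[X^{(\beta)}]\subseteq\bigcap_{\beta<\alpha}Y^{(\beta)}=Y^{(\alpha)},
\]
completing the induction.

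For the second assertion, if $X\subset_h Y$, pick an embedding $h\colon X\to Y$, i.e.\ a homeomorphism between $X$ and the subspace $h[X]\subseteq Y$. Since $h$ is in particular a continuous injection, the first part yields $h[X^{(\alpha)}]\subseteq Y^{(\alpha)}$. The restriction $h|_{X^{(\alpha)}}\colon X^{(\alpha)}\to h[X^{(\alpha)}]$ is a homeomorphism (being the restriction of the homeomorphism $h\colon X\to h[X]$ to a subspace), and the topology on $h[X^{(\alpha)}]$ inherited from $Y^{(\alpha)}$ coincides with the one inherited from $Y$ by transitivity of the subspace topology. Thus $X^{(\alpha)}$ is homeomorphic to a subspace of $Y^{(\alpha)}$, which means $X^{(\alpha)}\subset_h Y^{(\alpha)}$.

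There is no real obstacle here: the only content is the successor step, and what makes it work is that continuity produces a witness $y\in f^{-1}[U]\cap X^{(\alpha)}$ distinct from $x$, while injectivity guarantees that the image $f(y)$ remains distinct from $f(x)$ in $U$. The limit step is purely set-theoretic, and the transition from the first assertion to the second is formal, so the whole argument is short.
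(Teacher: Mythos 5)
Your proof is correct and follows essentially the same route as the paper: a transfinite induction whose only real content is the successor step (where continuity supplies a point of $X^{(\alpha)}\setminus\{x\}$ in the preimage of a neighbourhood and injectivity keeps its image distinct from $f(x)$), the same intersection argument at limits, and the same observation that an embedding restricts to an embedding of $X^{(\alpha)}$ into $Y^{(\alpha)}$. The paper merely phrases the successor step via closures, $f(x)\in\cl f[X\setminus\{x\}]=\cl(f[X]\setminus\{f(x)\})$, which is the same argument in different notation.
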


\begin{proof}
If $f\colon X\to Y$ is a continuous injection and $x\in X^{(1)}$, then we have
$$f(x)\in\cl f[X\setminus\{x\}]=\cl (f[X]\setminus\{f(x)\}),$$
hence $f(x)\in f[X]^{(1)}\subseteq Y^{(1)}$.
By induction on $\alpha$,  assume  that $\beta<\alpha$ implies  $f[X^{(\beta)}]\subseteq Y^{(\beta)}$.
Thus, if $\alpha=\beta+1$, then
$$f[X^{(\beta+1)}]\subseteq f[X^{(\beta)}]^{(1)}\subseteq (Y^{(\beta)})^{(1)}=Y^{(\beta+1)}.$$
If $\alpha$ is a limit ordinal number, then
$$\textstyle f[\bigcap_{\beta<\alpha}X^{(\beta)}]\subseteq \bigcap_{\beta<\alpha}f[X^{(\beta)}]\subseteq \bigcap_{\beta<\alpha}Y^{(\beta)}=Y^{(\alpha)}.$$

If  $f\colon X\to Y$ is an embedding, then $f[X^{(\alpha)}]\subseteq Y^{(\alpha)}$ and also if a set  $U\subseteq X$ is open, then $f[U\cap X^{(\alpha)}]=f[U]\cap f[X^{(\alpha)}]$
is an open subset of $f[X^{(\alpha)}]$.
\end{proof}

It appears that Lemma \ref{lem:23}  can be reversed.

\begin{lem}\label{lem:24}
If  $Z$ is a \sP-space such that $0<N(Z)\leq\omega$ and $\lambda\in Lim$, then there exists a \sP-space $Z^*$ such that  ${Z^*}^{(\lambda)}\cong Z$.
Moreover, $Z^*\setminus {Z^*}^{(\lambda+1)}$ is the sum $\bigoplus_\kappa J(\lambda+1)$, where $\kappa=|Z\setminus Z^{(1)}|$.
\end{lem}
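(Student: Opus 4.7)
The plan is to build $Z^*$ by grafting a clopen copy of $J(\lambda+1)$ onto each isolated point of $Z$, with the grafted copy's unique point in $J(\lambda+1)^{(\lambda)}$ identified with the original isolated point. Concretely, set $I:=Z\setminus Z^{(1)}$, so $|I|=\kappa$, and for each $z\in I$ take a pairwise disjoint copy $C_z\cong J(\lambda+1)$ whose unique element of $C_z^{(\lambda)}$ is relabelled as $z$. Let
$$Z^*:=Z^{(1)}\sqcup\bigsqcup_{z\in I} C_z$$
carry the topology whose basic open sets are (i) the open subsets of each $C_z$ (in its intrinsic $J(\lambda+1)$ topology), and (ii) for each $p\in Z^{(1)}$ and each clopen neighbourhood $W$ of $p$ in $Z$, the set $V_W(p):=(W\cap Z^{(1)})\cup\bigcup_{w\in W\cap I} C_w$.

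First I would verify that $Z^*$ is a regular \sP-space. Since the complement of $V_W(p)$ in $Z^*$ has the same form (with $Z\setminus W$ in place of $W$), each $V_W(p)$ is clopen. Using that $Z$ is a \sP-space and $\bigcap_n V_{W_n}(p)=V_{\bigcap_n W_n}(p)$, countable intersections of basic neighbourhoods of $p\in Z^{(1)}$ are open; countable intersections inside a single $C_z$ are open because $J(\lambda+1)$ is a \sP-space. Regularity and a clopen base for $Z^*$ then follow from the corresponding properties of $Z$ and of each $C_z$.

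Next I would prove by transfinite induction on $\alpha$ with $1\leq\alpha\leq\lambda$ that
$$Z^{*(\alpha)}=Z^{(1)}\cup\bigcup_{z\in I} C_z^{(\alpha)}.$$
In the successor step, clopenness of $C_z$ gives $Z^{*(\alpha+1)}\cap C_z=C_z^{(\alpha+1)}$, and any $p\in Z^{(1)}$ remains non-isolated in $Z^{*(\alpha)}$ because each $V_W(p)$ contains some $w\in W\cap I$ with $C_w^{(\alpha)}\neq\emptyset$ (using $\alpha\leq\lambda<N(C_w)$). The limit step follows from the distributive identity $\bigcap_{\beta<\alpha}(Z^{(1)}\cup B_\beta)=Z^{(1)}\cup\bigcap_{\beta<\alpha}B_\beta$ applied to $B_\beta=\bigcup_{z\in I} C_z^{(\beta)}$.

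Evaluating at $\alpha=\lambda$ gives $C_z^{(\lambda)}=\{z\}$, so $Z^{*(\lambda)}=Z^{(1)}\cup I=Z$ as a set; the induced topology matches that of $Z$ because $V_W(p)\cap Z^{*(\lambda)}=W$ and each $z\in I$ is isolated in $Z^{*(\lambda)}$ as the unique top of $C_z$. At $\alpha=\lambda+1$ each $C_z^{(\lambda+1)}=\emptyset$, hence $Z^{*(\lambda+1)}=Z^{(1)}$ and $Z^*\setminus Z^{*(\lambda+1)}=\bigsqcup_{z\in I} C_z\cong\bigoplus_\kappa J(\lambda+1)$, as required. The main obstacle will be the derivative computation, in particular verifying that points of $Z^{(1)}$ remain in $Z^{*(\alpha)}$ for every $\alpha\leq\lambda$ despite the ``fibres'' $C_z$ shrinking with $\alpha$, together with the careful check that the topology inherited by $Z^{*(\lambda)}$ recovers exactly the original topology of $Z$.
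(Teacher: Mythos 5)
Your construction is exactly the one in the paper: replace each isolated point of $Z$ by a clopen copy of $J(\lambda+1)$ rooted at that point, keep $Z^{(1)}$, and give points of $Z^{(1)}$ the basic neighbourhoods $V_W(p)=(W\cap Z^{(1)})\cup\bigcup_{w\in W\cap I}C_w$. The paper leaves the verification "to the reader," and your filled-in details (the $P$-space check, the transfinite computation of $Z^{*(\alpha)}$ using density of isolated points of $Z$, and the identification of the subspace topology on $Z^{*(\lambda)}$ with that of $Z$) are correct.
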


\begin{proof}
Fix a \sP-space $Z$ such that $0<N(Z)\leq\omega$.
Let $Z^*=Z^{(1)}\cup \bigcup\{J_x\colon x\in Z\setminus Z^{(1)}\}$, where $\{J_x\colon x\in Z\setminus Z^{(1)}\}$ are disjoint copies of $J(\lambda+1)$.
Equip the set $Z^*$ with a topology as follows.
Each subset of the form $J_x$ is clopen in $Z^*$ and it is homeomorphic to $J(\lambda+1)$.
If $z\in Z^{(1)}$ and $V$ is a neighbourhood of $z$ in $Z$, then let
$$V^*=V^{(1)}\cup\bigcup\{J_x\colon x\in V\setminus Z^{(1)}\}.$$
Let the family $\calB_z=\{V^*\colon V\mbox{ is a neighbourhood of }z\in Z\}$ constitute a base at the point $z\in Z^*$.
We leave the reader to check that  the space $Z^*$ is as desired.
\end{proof}

\begin{lem}\label{lem:25a}
If $\lambda\in Lim\cap\omega_1$ and $E$ is a \sP-space with $E^{(\lambda+n)}=\{g\}$, then $E$ contains a subspace $Y=_hJ(\lambda+1)$ such that $Y^{(\lambda)}=\{g\}$.
\end{lem}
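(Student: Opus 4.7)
I would prove the lemma by constructing $Y$ directly inside $E$ as a union of elementary pieces of ranks cofinal in $\lambda$, and then invoking Proposition \ref{pro:19} once $Y$ has been shown to be elementary of rank $\lambda+1$ with top point $g$. Fix a \sP-base $\{V_\alpha:\alpha<\omega_1\}$ at $g\in E$. Since $g\in E^{(\lambda+n)}\subseteq E^{(\beta+1)}$ for every $\beta<\lambda$, the point $g$ is an accumulation point of $E^{(\beta)}$; consequently, for each such $\beta$ the set $A_\beta=\{\alpha<\omega_1:(V_\alpha\setminus V_{\alpha+1})\cap E^{(\beta)}\neq\emptyset\}$ is cofinal in $\omega_1$. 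Indeed, given $\alpha_0$, the neighbourhood $V_{\alpha_0+1}$ contains a witness $y\in E^{(\beta)}\setminus\{g\}$, and the unique slice $V_\alpha\setminus V_{\alpha+1}$ to which $y$ belongs satisfies $\alpha\geq\alpha_0+1>\alpha_0$.

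For each pair $(\beta,\alpha)$ with $\alpha\in A_\beta$, combining Lemma \ref{lem:4} with Proposition \ref{pro:5} I can pick an elementary clopen subset $F^\beta_\alpha\subseteq V_\alpha\setminus V_{\alpha+1}$ of Cantor--Bendixson rank exactly $\beta+1$: decompose the slice into elementary sets, pick one of sufficiently high rank (it contains a point of $E^{(\beta)}$), and apply Proposition \ref{pro:5} to trim it. Since $\lambda$ is countable while each $A_\beta$ has cardinality $\omega_1$, a straightforward transfinite recursion along an $\omega_1$-enumeration of $\lambda\times\omega_1$ produces, for every $\beta<\lambda$, a cofinal $\omega_1$-sequence $(\alpha^\beta_\gamma)_{\gamma<\omega_1}$ of indices in $A_\beta$, all the chosen indices $\alpha^\beta_\gamma$ being pairwise distinct across distinct pairs $(\beta,\gamma)$; consequently the corresponding pieces lie in pairwise disjoint slices. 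I set
$$Y=\{g\}\cup\bigcup\bigl\{F^\beta_{\alpha^\beta_\gamma}:\beta<\lambda,\ \gamma<\omega_1\bigr\}.$$

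To finish, I verify $Y^{(\lambda)}=\{g\}$ and apply Proposition \ref{pro:19}. Each piece $F^\beta_{\alpha^\beta_\gamma}$ is clopen in $E$, hence clopen in $Y$, so the $Y$-rank of any of its points equals its rank inside the piece, which is at most $\beta<\lambda$; thus $Y^{(\lambda)}\subseteq\{g\}$. That $g\in Y^{(\lambda)}$ follows by transfinite induction on $\gamma<\lambda$: the case $\gamma=0$ is trivial; at a successor $\gamma=\delta+1<\lambda$, pick any $\beta$ with $\delta\leq\beta<\lambda$, then the top points of the pieces $F^\beta_{\alpha^\beta_\gamma}$ have $Y$-rank $\beta\geq\delta$, and cofinality of $(\alpha^\beta_\gamma)_{\gamma<\omega_1}$ in $\omega_1$ forces these top points to meet every trace $V_{\alpha_0}\cap Y$, so $g$ accumulates $Y^{(\delta)}$; at limits use $Y^{(\gamma)}=\bigcap_{\delta<\gamma}Y^{(\delta)}$. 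Therefore $Y$ is an elementary \sP-space with $Y^{(\lambda)}=\{g\}$, and the properly decreasing traces $V_{\alpha^\beta_\gamma}\cap Y$ witness weight $\omega_1$, so Proposition \ref{pro:19} yields $Y=_h J(\lambda+1)$.

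The main obstacle is the book-keeping in the middle paragraph: one must simultaneously guarantee (a) that for every single $\beta<\lambda$ the indices $\alpha^\beta_\gamma$ remain cofinal in $\omega_1$, which is what drives $g$ into $Y^{(\gamma)}$ for every $\gamma<\lambda$, and (b) that distinct pieces sit in disjoint slices, which is what keeps the $Y$-rank of every point other than $g$ strictly below $\lambda$. Because $\lambda$ is countable while each $A_\beta$ already has size $\omega_1$, such an assignment is easily arranged, but it is the essential combinatorial content of the argument.
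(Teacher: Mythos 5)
Your proof is correct. It follows the same basic strategy as the paper's --- assembling a copy of $J(\lambda+1)$ around $g$ out of pieces placed in cofinally many slices of a \sP-base at $g$ --- but the execution differs in two places. The paper refines the base so that every slice has rank $\lambda+n-1$, drops a copy of $J(\lambda)$ into each slice via Theorem~\ref{thm:18}, and recognises the union as $J(\lambda+1)$ directly; you instead manufacture elementary pieces of rank exactly $\beta+1$ (via Lemma~\ref{lem:4} and Proposition~\ref{pro:5}) for each $\beta<\lambda$, distribute them so that each rank occurs cofinally often while distinct pieces occupy distinct slices, verify $Y^{(\lambda)}=\{g\}$ by hand, and only then invoke Proposition~\ref{pro:19}. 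Your version needs less machinery per slice and makes explicit the cofinality book-keeping that the paper leaves implicit in its choice of a refined base; the price is the extra verification that $Y$ is an elementary set of rank exactly $\lambda+1$ and of weight $\omega_1$, which you carry out correctly (the clopenness of the pieces in $Y$ is exactly what bounds the rank of every point other than $g$ below $\lambda$, and the cofinal placement is what pushes $g$ into $Y^{(\delta)}$ for every $\delta<\lambda$). Both routes ultimately rest on Theorem~\ref{thm:18}, since Proposition~\ref{pro:19} is proved from it, so there is no circularity and no gap.
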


\begin{proof}
Fix a \sP-space $E$ such that $E^{(\lambda+n)}=\{g\}$.
If $n=0$, then $E=_hJ(\lambda+1)$  by Proposition \ref{pro:19}.
If $n>0$, then fix a \sP-base $\{V_\alpha\colon \alpha<\omega_1\}$ at $g\in E$ such that each slice $N(V_\alpha\setminus V_{\alpha+1})=\lambda+n-1$.
By Theorem \ref{thm:18}, we can choose a subspace $Y_\alpha\subseteq V_\alpha\setminus V_{\alpha+1}$ such that $Y_\alpha=_h J(\lambda)$.
Therefore  $\{g\}\cup \bigcup\{Y_\alpha\colon \alpha<\omega_1\}\cong J(\lambda+1)$.
\end{proof}

\begin{lem}\label{lem:25}
Let $X,Y$ be scattered \sP-spaces and $\lambda\in Lim\cap\omega_1$.
If $X^{(\lambda)}\subset_h Y^{(\lambda)}$, then $X\subset_h Y$.
\end{lem}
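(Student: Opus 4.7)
The plan is to extend the given embedding $h\colon X^{(\lambda)}\to Y^{(\lambda)}$ to an embedding $F\colon X\to Y$ by decomposing $X$ into elementary pieces and embedding each piece into a suitable clopen region of $Y$ in a manner compatible with $h$. First, using Lemma \ref{lem:4}, I partition $X$ into elementary pieces $\{W_\mu\colon\mu<\omega_1\}$, refined so that for each $\mu$ either $W_\mu\cap X^{(\lambda)}=\emptyset$ (whence $N(W_\mu)\leq\lambda$) or the top point $g_\mu$ of $W_\mu$ lies in $X^{(\lambda+k_\mu)}\setminus X^{(\lambda+k_\mu+1)}$ for some $k_\mu\geq 0$, with $W_\mu\cap X^{(\lambda+k_\mu)}=\{g_\mu\}$ and $N(W_\mu)=\lambda+k_\mu+1$. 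This refinement is possible because each point of $X^{(\lambda+k)}\setminus X^{(\lambda+k+1)}$ is isolated in $X^{(\lambda+k)}$, and hence admits a clopen neighborhood in $X$ meeting $X^{(\lambda+k)}$ only at that point.

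Next, applying Lemma \ref{lem:23} to $h$ and using $(X^{(\lambda)})^{(k)}=X^{(\lambda+k)}$ (analogously for $Y$), one obtains $h[X^{(\lambda+k)}]\subseteq Y^{(\lambda+k)}$; in particular $h(g_\mu)\in Y^{(\lambda+k_\mu)}$. Thus $h(g_\mu)$ admits a clopen elementary neighborhood $V_\mu\subseteq Y$ with top $h(g_\mu)$ and $N(V_\mu)\geq\lambda+k_\mu+1$. A strengthening of Lemma \ref{lem:25a}, obtained by the same induction but yielding a subspace homeomorphic to $J(\lambda+k_\mu+1)$ with $h(g_\mu)$ at the top, combined with Proposition \ref{pro:15} giving $W_\mu\subset_h J(\lambda+k_\mu+1)$, produces an embedding $F_\mu\colon W_\mu\to V_\mu$ satisfying $F_\mu(g_\mu)=h(g_\mu)$.

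I then arrange the $V_\mu$'s to be pairwise disjoint by applying Lemma \ref{lem:4} to $Y$ to obtain a partition separating the points $h(g_\mu)$, which is feasible because $\{h(g_\mu)\}$ inherits its scattered structure from $\{g_\mu\}$. The remaining pieces $W_\mu$ with $N(W_\mu)\leq\lambda$ are embedded into the unused clopen regions of $Y$, using Proposition \ref{pro:17} together with the observation that, in the nontrivial case $X^{(\lambda)}\neq\emptyset$, one has $N(Y)>\lambda$, so $Y$ contains a copy of $\bigoplus_{\omega_1}J(\lambda)$ by Theorem \ref{thm:18}. Setting $F=h\cup\bigcup_\mu F_\mu$, continuity of $F$ at each $g\in X^{(\lambda)}$ follows from the continuity of $h$ together with the $P$-base structure at $g$ in $X$ and at $h(g)$ in $Y$: any clopen neighborhood of $h(g)$ in $Y$ pulls back through $h$ to a clopen neighborhood of $g$ in $X^{(\lambda)}$, which then determines a clopen neighborhood of $g$ in $X$ sent by $F$ inside the chosen one.

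The main obstacle will be arranging the pairwise disjointness of the $V_\mu$'s while respecting $h$, since the points $h(g_\mu)$ may be topologically entangled in $Y^{(\lambda)}$; overcoming this demands careful bookkeeping of clopen neighborhoods in $Y$ driven by the partition of $X$. A secondary technical point is the strengthened version of Lemma \ref{lem:25a} used above, which follows by the same style of induction as the original.
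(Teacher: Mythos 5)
There is a genuine gap at the step where you embed a piece $W_\mu$ with $k_\mu\geq 1$ into $V_\mu$. The ``strengthening of Lemma \ref{lem:25a}'' you invoke --- that an elementary $V_\mu$ with $N(V_\mu)\geq\lambda+k_\mu+1$ contains a copy of $J(\lambda+k_\mu+1)$ with $h(g_\mu)$ on top --- is false for $k_\mu\geq 1$. Lemma \ref{lem:25a} is special to $J(\lambda+1)$: every elementary set of rank $\geq\lambda+1$ absorbs $J(\lambda+1)$ because $J(\lambda)\cong\bigoplus_{\omega_1}J(\lambda)$ (Lemma \ref{lem:13}). For $k\geq 1$ the analogue fails. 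Take $V$ elementary with $V^{(\lambda+1)}=\{g\}$ and a \sP-base at $g$ each of whose slices is a single copy of $J(\lambda+1)$, so that each slice meets $V^{(\lambda)}$ in exactly one point. Then $V^{(\lambda)}=_h i(2)$, whereas $J(\lambda+2)^{(\lambda)}=_h J(2)$, and $J(2)\not\subset_h i(2)$; by Lemma \ref{lem:23}, $J(\lambda+2)\not\subset_h V$. This is precisely the ``factor of two'' phenomenon behind Proposition \ref{pro:9} and Theorem \ref{thm:18}: guaranteeing $J(\lambda+k+1)\subset_h V$ needs rank about $\lambda+2k+1$, not $\lambda+k+1$. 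Since any partition of a space with $N(X)\geq\lambda+2$ must contain pieces with $k_\mu\geq 1$, your chain $W_\mu\subset_h J(\lambda+k_\mu+1)\subset_h V_\mu$ breaks at the second link even in trivially true instances such as $X=Y=V$ above. (A secondary defect: when $k_\mu\geq 1$ the set $W_\mu\cap X^{(\lambda)}$ is strictly larger than $\{g_\mu\}$, so $h$ and $F_\mu$ compete there and $F=h\cup\bigcup_\mu F_\mu$ is not well defined as written.)

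The paper avoids all of this by never re-embedding any elementary piece of rank above $\lambda+1$. It keeps the given embedding $f$ of $X^{(\lambda)}$ as the skeleton and attaches, to each point $x$ of the \emph{bottom} layer $X^{(\lambda)}\setminus X^{(\lambda+1)}$ only, an elementary $U_x$ with $U_x\cap X^{(\lambda)}=\{x\}$. Such a $U_x$ has rank exactly $\lambda+1$, hence $U_x=_h J(\lambda+1)$ by Proposition \ref{pro:19}, and $J(\lambda+1)$ does embed, with $f(x)$ on top, into any elementary neighbourhood $V_x$ of $f(x)$ of rank $\geq\lambda+1$ --- this is exactly what Lemma \ref{lem:25a} provides. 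The accumulation structure at levels $\geq\lambda+1$ is carried entirely by $f$ and never reconstructed. If you reorganise your argument around the bottom layer of $X^{(\lambda)}$ in this way, the problematic strengthened lemma is no longer needed.
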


\begin{proof}
Fix an embedding $f\colon X^{(\lambda)}\to Y^{(\lambda)}$.
For each $x\in X^{(\lambda)}\setminus X^{(\lambda+1)}$ there exists an elementary $U_x\subseteq X$ such that $U_x\cap X^{(\lambda)}=\{x\}$.
Without loss of generality, we can assume that sets $U_x$ are pairwise disjoint and, by Theorem \ref{thm:18}, each $U_x=_h J(\lambda+1)$.
Similarly, choose a family $\{V_x\subseteq Y\colon x\in X^{(\lambda)}\setminus  X^{(\lambda+1)}\}$ of pairwise disjoint elementary subsets of $Y$ such that $V_x\cap f[X^{(\lambda)}\setminus X^{(\lambda+1)}]=\{f(x)\}$.
For each $x\in X^{(\lambda)}\setminus X^{(\lambda+1)}$ we have $N(V_x)\geq \lambda+1$ and $U_x=_h J(\lambda+1)$.
Thus we can define an embedding $g_x\colon U_x\to V_x$ as in Lemma \ref{lem:25a}.
If $F\colon X\to Y$  is such that $F|_{X^{(\lambda)}}=f$ and $F|_{U_x}=g_x$, for each $x\in X^{(\lambda)}\setminus X^{(\lambda+1)}$, then $F$ is a desired embedding.
\end{proof}

\begin{thm}\label{thm:20}
If $\lambda$ is a  countable limit ordinal, then ordered sets
\mbox{$(\calP_\lambda,<_h)$} and $(\calP_0,<_h)$ are isomorphic.
\end{thm}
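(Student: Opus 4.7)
The plan is to exhibit the $\lambda$-th derivative as the isomorphism. Define
$$\Phi\colon\calP_\lambda\to\calP_0,\qquad \Phi([X]_h)=[X^{(\lambda)}]_h.$$
First I would verify that the codomain is correct: for any $P$-space $X$ with $\lambda<N(X)\leq\lambda+\omega$, a routine transfinite induction shows $X^{(\lambda+\alpha)}=(X^{(\lambda)})^{(\alpha)}$, so $N(X^{(\lambda)})=N(X)-\lambda\in(0,\omega]$ and hence $[X^{(\lambda)}]_h\in\calP_0$. Well-definedness on equivalence classes is immediate from Lemma \ref{lem:23} applied in both directions.

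Next I would check that $\Phi$ is an order-isomorphism. Lemma \ref{lem:23} gives the forward implication: $X\subset_hY$ yields $X^{(\lambda)}\subset_hY^{(\lambda)}$, so $[X]_h\leq_h[Y]_h$ implies $\Phi([X]_h)\leq_h\Phi([Y]_h)$. Conversely, Lemma \ref{lem:25} supplies the backward implication: $X^{(\lambda)}\subset_hY^{(\lambda)}$ entails $X\subset_hY$. Combining both directions gives, in particular, that $\Phi$ is injective (if $\Phi([X]_h)=\Phi([Y]_h)$, then $X^{(\lambda)}=_hY^{(\lambda)}$, whence $X=_hY$ by two applications of Lemma \ref{lem:25}), and that strict comparability is both preserved and reflected.

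It remains to show surjectivity. Given $[Z]_h\in\calP_0$, so $0<N(Z)\leq\omega$, apply Lemma \ref{lem:24} to obtain a $P$-space $Z^*$ with ${Z^*}^{(\lambda)}\cong Z$. Then $N(Z^*)=\lambda+N(Z)\in(\lambda,\lambda+\omega]$, so $[Z^*]_h\in\calP_\lambda$, and by construction $\Phi([Z^*]_h)=[Z]_h$.

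The whole argument is really an assembly of Lemmas \ref{lem:23}, \ref{lem:24}, \ref{lem:25}; there is no genuine obstacle. The only point requiring a little care is the verification that order-reflection (backed by Lemma \ref{lem:25}) upgrades automatically to preservation of strict inequalities and to injectivity, but this is formal: if $X\subset_hY$ and $Y\not\subset_hX$, then $X^{(\lambda)}\subset_hY^{(\lambda)}$ by Lemma \ref{lem:23}, and $Y^{(\lambda)}\subset_hX^{(\lambda)}$ would give $Y\subset_hX$ by Lemma \ref{lem:25}, contradicting our assumption.
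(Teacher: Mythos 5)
Your proposal is correct and follows essentially the same route as the paper: the same map $[X]_h\mapsto[X^{(\lambda)}]_h$, with Lemma \ref{lem:23} for well-definedness and order-preservation, Lemma \ref{lem:24} for surjectivity, and Lemma \ref{lem:25} for order-reflection and injectivity. Your extra remarks on the codomain check and on why reflection upgrades preservation to strict inequalities are points the paper leaves implicit, but they do not change the argument.
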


\begin{proof}
By Lemma \ref{lem:23}, if   $[X]_h,[Y]_h\in\calP_\lambda$ and $[X]_h=[Y]_h$, then $[X^{(\lambda)}]_h=[Y^{(\lambda)}]_h$.
Thus we can define $\psi\colon\calP_\lambda\to \calP_0$ by $\psi([X]_h)=[X^{(\lambda)}]_h.$
It remains to show that $\psi$ is a desired isomorphism.

By Lemma \ref{lem:24}, the function $\psi$ is a surjection.
Again, by Lemma \ref{lem:23}, we have
$$[X]_h<_h[Y]_h\Rightarrow \psi([X]_h)<_h\psi([Y]_h),$$
whenever $[X]_h,[Y]_h\in\calP_\lambda$.

By Lemma \ref{lem:25},  if $[X]_h,[Y]_h\in\calP_\lambda$ and $X^{(\lambda)}\subset_h Y^{(\lambda)}$, then $X\subset_h Y$, which implies injectivity of $\psi$.
So, the proof is finished.
\end{proof}

\begin{cor}\label{cor:21}
  Let $\calF$ be a family of   scattered \sP-spaces of weight $\omega_1$ with countable Cantor--Bendixson ranks.
In $(\calF,\subset_h)$, any antichain  and any strictly decreasing chain are finite.
\end{cor}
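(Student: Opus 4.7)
The plan is to reduce to the finite-rank case of Corollary~\ref{cor:11} via Theorem~\ref{thm:20} and the decomposition of dimensional types into the families $\calP_\lambda$.

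Since every $X\in\calF$ satisfies $N(X)<\omega_1$, each dimensional type $[X]_h$ lies in a unique $\calP_\lambda$ with $\lambda\in(Lim\cup\{0\})\cap\omega_1$, namely the one with $\lambda<N(X)\leq\lambda+\omega$. The crucial auxiliary claim I would establish is: if $[X]_h\in\calP_{\lambda_1}$ and $[Y]_h\in\calP_{\lambda_2}$ with $\lambda_1<\lambda_2$, then $X\subset_h Y$ and $X\neq_h Y$. The argument chains three embeddings. First, $X\subset_h J(\lambda_1+\omega)$ by Proposition~\ref{pro:17}. Second, the next element of $Lim\cup\{0\}$ past $\lambda_1$ is $\lambda_1+\omega$, so $\lambda_2\geq\lambda_1+\omega$, and $J(\lambda_1+\omega)$ occurs as a clopen summand of $J(\lambda_2)=\bigoplus\{J(\gamma)\colon\gamma<\lambda_2\}$. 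Third, because $N(Y)>\lambda_2$, one picks an isolated point $y$ of $Y^{(\lambda_2)}$ and a clopen neighbourhood $E\subseteq Y$ with $E\cap Y^{(\lambda_2)}=\{y\}$; then $E$ is elementary of rank $\lambda_2+1$, so Proposition~\ref{pro:19} gives $E=_h J(\lambda_2+1)$, and since $J(\lambda_2)$ appears as a clopen summand in any slice of the $P$-base of $J(\lambda_2+1)$, we conclude $J(\lambda_2)\subset_h Y$. The non-equivalence $X\neq_h Y$ is automatic from $[X]_h\neq[Y]_h$.

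With this claim in hand, an antichain $\calA\subseteq\calF$ cannot contain a pair whose dimensional types lie in distinct $\calP_\lambda$'s, so $\{[X]_h\colon X\in\calA\}$ is an antichain inside a single $\calP_\lambda$. Theorem~\ref{thm:20} identifies $(\calP_\lambda,<_h)$ with $(\calP_0,<_h)$, and $\calP_0$ splits into the countably many finite-rank classes (in which antichains are finite by Corollary~\ref{cor:11}) together with the single maximum class $[J(\omega)]_h$, which, being comparable with everything below it, can occur in an antichain only as a singleton. Hence $\calA$ is finite. For a strictly decreasing chain $X_1\supset_h X_2\supset_h\ldots$ of pairwise non-equivalent spaces, the comparability claim forces the associated sequence of $\lambda$-indices to be non-increasing (otherwise $X_n\subset_h X_{n+1}$ would contradict strictness), hence eventually constant at some $\lambda$; the tail is a strictly decreasing chain in $\calP_\lambda\cong\calP_0$, which is finite for the same reason, with $[J(\omega)]_h$ contributing at most one extra element at the top.

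The step I expect to be the main obstacle is the third link of the comparability claim, namely producing the embedding $J(\lambda_2)\subset_h Y$ from the single assumption $N(Y)>\lambda_2$; this is exactly where Proposition~\ref{pro:19} earns its keep by pinning down the dimensional type of any rank-$(\lambda_2+1)$ elementary set.
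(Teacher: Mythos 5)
Your proof is correct and follows essentially the same route as the paper: show that all members of an antichain (or of a strictly decreasing chain, after a tail) must lie in a single $\calP_\lambda$, then transfer to $\calP_0$ via Theorem~\ref{thm:20} and invoke Corollary~\ref{cor:11}. The paper compresses the comparability-across-different-$\lambda$ step into a citation of Theorem~\ref{thm:18} and Proposition~\ref{pro:17}; your three-link chain through $J(\lambda_1+\omega)\subset_h J(\lambda_2)\subset_h Y$ (using Proposition~\ref{pro:19}) is a legitimate, and more explicit, way of supplying that step, and your separate treatment of the class $[J(\omega)]_h$ inside $\calP_0$ is a detail the paper leaves implicit.
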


\begin{proof}
  Assume that $\calA$ is an antichain in $(\calF,\subset_h)$ and $X\in\calA$, and $N(X)=\lambda+n$, where $\lambda\in Lim$ and $n<\omega$.
  By Theorem \ref{thm:18} and Proposition \ref{pro:17},   if $Y\in\calA$, then $N(Y)=\lambda+m$, where $m<\omega$ and $\lambda<\omega_1$. 
  By Corollary \ref{cor:11} and Theorem \ref{thm:20}, the family $\calA$ has to be finite.
  Analogously, we proceed in the case $\calA$ is a decreasing chain.
\end{proof}

\section{A few remarks on \sP-spaces with Cantor--Bendixson rank $\geq\omega_1+1$}

As we have learned there is only one sensible way of defining an elementary set with Cantor--Bendixson rank $\beta+1$  for $\beta\in Lim$, since any two such elementary sets have the same dimensional type by Proposition \ref{pro:19}.
This is not the case for elementary sets with Cantor--Bendixson rank $>\omega_1$.
Namely, let $Y(\omega_1)$ be an elementary set such that $Y(\omega_1)^{(\omega_1)}=\{g\}$  and each slice $V_\alpha\setminus V_{\alpha+1}=J(\alpha)$.

 \begin{pro}\label{pro:24}
   If $X$ is  a \sP-space of the weight $\omega_1$ such that $|X^{(\omega_1)}|=1$, then $X=_hY(\omega_1)$ or $X=_hJ(\omega_1+1)$, or $X=_hY(\omega_1)\oplus J(\omega_1)$.
 \end{pro}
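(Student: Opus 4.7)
The plan is to classify $X$ by analysing the neighbourhood filter at $g$. Fix a \sP-base $\{V_\alpha:\alpha<\omega_1\}$ at $g$, with slices $S_\alpha = V_\alpha\setminus V_{\alpha+1}$. Each slice is a scattered \sP-space with $g\notin S_\alpha$, hence $N(S_\alpha)\leq\omega_1$, and since $g\in X^{(\omega_1)}$ the ranks $N(S_\alpha)$ are cofinal in $\omega_1$. The three possibilities in the statement correspond to three cases, distinguished by (i) whether $X$ contains a clopen subset $K$ with $K=_h J(\omega_1)$ and $g\notin\cl K$, and (ii) if so, whether every clopen neighbourhood of $g$ inherits this property.

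In the first case, no such $K$ exists; equivalently, every clopen subset of $X$ disjoint from a neighbourhood of $g$ has countable rank, so each $S_\alpha$ has $N(S_\alpha)<\omega_1$. By Proposition~\ref{pro:15} each $S_\alpha$ embeds in $J(N(S_\alpha))$, while Theorem~\ref{thm:18} (applied to elementary subsets of sufficiently large rank inside the $S_\beta$'s) provides, for each $\gamma<\omega_1$, an embedding of $J(\gamma)$ into some $S_{\beta(\gamma)}$. Choosing suitable injections $\omega_1\to\omega_1$ and invoking Lemma~\ref{lem:2} in both directions yields $X\subset_h Y(\omega_1)$ and $Y(\omega_1)\subset_h X$, so $X=_h Y(\omega_1)$.

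In the second case, such $K$ exists and, in addition, every clopen neighbourhood $W$ of $g$ itself contains a clopen copy of $J(\omega_1)$ inside $W\setminus\{g\}$. Inductively refine $\{V_\alpha\}$ so that each slice $S_\alpha$ contains a clopen copy of $J(\omega_1)$; Lemma~\ref{lem:13} ($J(\omega_1)\cong\bigoplus_{\omega_1}J(\omega_1)$), combined with absorption of the remaining countable-rank material into these copies, lets us arrange each $S_\alpha=_h J(\omega_1)$. Since the defining \sP-base of $J(\omega_1+1)$ has slices $\bigoplus_{\omega_1}J(\omega_1)\cong J(\omega_1)$, Lemma~\ref{lem:2} applied in both directions gives $X=_h J(\omega_1+1)$.

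In the third case, such $K$ exists but some clopen neighbourhood $V_0$ of $g$ has no clopen copy of $J(\omega_1)$ inside $V_0\setminus\{g\}$. Extract a maximal clopen $K\cong J(\omega_1)$ from $X\setminus V_0$, absorbing every rank-$\omega_1$ clopen of $X\setminus V_0$ into $K$ via Lemma~\ref{lem:13}, and set $X'=X\setminus K$. Then $V_0\subseteq X'$, and $X'$ falls under the first case, so $X'=_h Y(\omega_1)$, whence $X=_h Y(\omega_1)\oplus J(\omega_1)$. The principal technical obstacle lies in this absorption step: one must verify that $K$ remains clopen in $X$ after collecting several rank-$\omega_1$ clopens, relying on the fact that countable unions of clopens in a \sP-space are clopen, and that the residue $X'$ retains a \sP-base at $g$ whose slice ranks are countable and cofinal in $\omega_1$.
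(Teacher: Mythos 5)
Your trichotomy --- no clopen set of Cantor--Bendixson rank $\omega_1$ away from $g$, such sets arbitrarily close to $g$, or such sets present but avoidable by a neighbourhood of $g$ --- is exactly the paper's case division on the ranks of the slices $V_\alpha\setminus V_{\alpha+1}$, and each case is settled by the same tools (Propositions \ref{pro:15} and \ref{pro:17}, Theorem \ref{thm:18}, and Lemma \ref{lem:2} applied in both directions), so this is essentially the paper's proof. Two small repairs: in the third case the ``maximal clopen $K$'' and the absorption step are unnecessary and somewhat dubious (maximality is unclear, as uncountable unions of clopen sets need not be clopen) --- simply take $K=X\setminus V_0$, which is clopen of rank $\omega_1$ and hence already $=_hJ(\omega_1)$ --- and throughout, only $=_hJ(\omega_1)$ rather than a homeomorphic clopen copy of $J(\omega_1)$ is either available or needed, so the case division should be phrased in terms of rank, as your ``equivalently'' in the first case already suggests.
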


 \begin{proof}
Let $X^{(\omega_1)}=\{x\}$.
Let $\{V_\alpha\colon \alpha<\omega_1\}$ be a \sP-base at the point $x\in X$.

If each slice $V_\alpha\setminus V_{\alpha+1}$ has Cantor--Bendixson rank $\omega_1$, then, by Proposition \ref{pro:15}, $X\subset_h J(\omega_1+1)$ and
$$\textstyle V_\alpha\setminus V_{\alpha+1}=_h\bigoplus_{\beta<\omega_1}J(\beta)=J(\omega_1),$$
for each $\alpha<\omega_1$.
Therefore $X=_h J(\omega_1+1)$.

If each slice $V_\alpha\setminus V_{\alpha+1}$ has Cantor--Bendixson rank $<\omega_1$, then, by Proposition \ref{pro:17}, for any  slice $V_\alpha\setminus V_{\alpha+1}$ there exists $\gamma<\omega_1$ such that $V_\alpha\setminus V_{\alpha+1}\subset_h J(\gamma)$.
Therefore,   we have $X\subset_h Y(\omega_1)$, by Lemma \ref{lem:2}.
By Theorem \ref{thm:18},  for any $\gamma<\omega_1$ there exists $\alpha<\omega_1$ such that $J(\gamma)\subset_h V_\alpha\setminus V_{\alpha+1}$, therefore we get $X=_h Y(\omega_1)$.

Without loss of generality, it remains to consider the case when  $N(V_0\setminus V_1)=\omega_1$ and $N(V_\alpha\setminus V_{\alpha+1})<\omega_1$ for $\alpha\geq 1$.
Then we have
$$X=V_1\cup (V_0\setminus V_1)=_hY(\omega_1)\oplus J(\omega_1),$$
which completes the proof.
\end{proof}

One can readily check that
$$J(\omega_1)\subset_h Y(\omega_1)\subset_h Y(\omega_1)\oplus J(\omega_1)\subset_h J(\omega_1+1)$$
and no two of these four spaces have the same dimensional type.

\section{Conclusions}
Compare cardinal characteristics of some classes of dimensional types with classes of non-homeomorphic spaces.

By Corollary \ref{cor:12} and Theorems \ref{thm:18} and \ref{thm:20}, if $\lambda<\omega_1$, then there are only  countably many dimensional types of \sP-spaces with Cantor--Bendixson ranks $\leq\lambda$.
Therefore, there is exactly $\omega_1$-many dimensional types of \sP-spaces with countable Cantor--Bendixson ranks.

 There exist $2^{\omega_1}$-many non-homeomorphic \sP-spaces $X$ with $N(X)=\omega_1$, but if $\lambda\in Lim\cap \omega_1$, then there are continuum many non-homeomorphic \sP-spaces $X$ with $N(X)=\lambda$.
 Indeed, for each
 $$A\subseteq\{\alpha+2k\colon \alpha\in Lim\cap\lambda\mbox{ and } k<\omega\},$$  let $X$ be a  \sP-space such that
$$X^{(\alpha)}\setminus X^{(\alpha+2)}=_h
\begin{cases}
\bigoplus_{\omega_1}i(2),&\mbox{if }\alpha\in A,\\
\bigoplus_{\omega_1}J(2),&\mbox{if }\alpha\notin A;\\
\end{cases}$$
constructing such a \sP-space  needs some extra work which we leave to the reader.
We get that different subsets of $\lambda$ are assigned to non-homeomorphic \sP-spaces.
Hence,  there are continuum many non-homeomorphic scattered \sP-spaces with countable Cantor--Bendixson rank. 
Similarly, one can prove that if $\lambda=\omega_1$, then there exist $2^{\omega_1}$-many non-homeomorphic scattered \sP-spaces.

It seems that examination of \sP-spaces with uncountable Cantor--Bendixson ranks needs several new ideas and extra efforts.
As it has been noted earlier,  the readers would easily conclude  results concerning scattered separable metric spaces mimicking our argumentation.
The same can be said about the so-called $\omega_\mu$-additive spaces, introduced by R. Sikorski \cite{sik}, compare \cite[p. 1]{mis}.
Indeed, consider the family of all scattered $\omega_\mu$-additive spaces of weight $\omega_\mu$.
Replacing $\omega_1$ by $\omega_\mu$, one can adapt  our results to this family.

\end{document}